\renewcommand {\epsilon}{\varepsilon}
\renewcommand {\ge}{\geqslant}
\renewcommand {\geq}{\geqslant}
\newtheorem{Theorem}{Theorem}[section]
\newtheorem{Lemma}[Theorem]{Lemma}
\newtheorem{Prop}[Theorem]{Proposition}
\newtheorem{Rem}[Theorem]{Remark}
\newtheorem{Assume}[Theorem]{Assumption} 
\def\Mcal{\mathcal{M}}
\newcommand{\heap}[2]{\genfrac{}{}{0pt}{}{#1}{#2}}
\newcommand{\ssup}[1] {{\scriptscriptstyle{({#1}})}}
\def\E{\mathbb E}
\def\N{\mathbb{N}}
\def\P{\mathbb P} 
\def\R{\mathbb{R}}
\def\Z{\mathbb{Z}}
\def\e{\mathrm{e}}
\def\eps{\varepsilon}
\def\1{\mathbf{1}}
\def\3{{\ss}}
\def\d{\mathrm{d}}
\begin{document}

\begin{frontmatter}

\title{Commutative diagram of the Gross-Pitaevskii approximation }
\runtitle{Commutative diagrams of the Gross-Pitaevski approximation}

\begin{aug}
\author{\fnms{Stefan}  \snm{Adams}\corref{}\thanksref{t1}\ead[label=e1]{S.Adams@warwick.ac.uk}}
\and
\author{\fnms{Chiranjib} \snm{Mukherjee}\corref{}\thanksref{t1}\ead[label=e2]{chiranjib.mukherjee@uni-muenster.de}}
    \thankstext{t1}{Funded by the Deutsche Forschungsgemeinschaft (DFG) under Germany's Excellence Strategy EXC 2044--390685587, Mathematics M\"unster: Dynamics--Geometry--Structure.}

\runauthor{S.~Adams and C.~Mukherjee}
  \affiliation{University of Warwick and University of M\"unster}
  
\address{A.~Adams\\Department of Mathematics,\\ University of Warwick
\\ Coventry\\ Coventry CV4 7AL\\ United Kingdom\\
          \printead{e1}\\
          }
          
          \address{C.~Mukherjee\\Inst.~Math.~Stochastics,\\ Department
of Mathematics\\ and Computer Science\\ University of M\"unster\\ Orl\'eans-Ring 10, D-48149\\ M\"unster, Germany\\
          \printead{e2}
          }
\end{aug}

\begin{abstract}
It is well-known that the {\it Gross-Pitaevskii} variational formula describes the the ground state energy of of $N$-indistinguishable trapped particles (bosons) in a dilute state in the large system size $N\to\infty$. The goal of the present article is to prove that the Gross-Pitaevskii formula also appears in the {\it iterative limit}  of zero temperature and large system size of the {\it product ground state energy} of the $N$-particle Hamiltonian operator. 

\end{abstract}

\begin{keyword}[class=MSC]
\kwd[Primary ]{}
\kwd[; secondary ]{}
\end{keyword}

\begin{keyword}
\kwd{Bose-Einstein condensation}
\kwd{Gross-Pitaevskii formula}
\kwd{Hartree variational formula}
\kwd{large deviations}
\kwd{interacting Brownian motions}
\end{keyword}

\end{frontmatter}

\section{Background and motivation}\label{sec-intro}

\subsection{\bf The Gross-Pitaevskii formula} 

Consider an $N$-particle quantum system which is described by an $N$-particle Hamiltonian operator 
$$
\begin{aligned}
 H_N= - \sum_{i=1}^N \Delta_i+ \sum_{1\leq i < j \leq N} v(|x_i-x_j|) 
+ \sum_{i=1}^N W(x_i), \quad (x_1,\dots,x_N)\in (\R^d)^N.
\end{aligned}
$$
In the present set up, the kinetic energy term $\Delta_i$ acts on the $i^{\mathrm{th}}$ particle,  $v:(0,\infty)\to (0,\infty)$ is a potential 
which captures  {\it mutual (or pair) interaction} between two particles, decays fast at infinity and explodes close to the origin, while $W:\R^d \to [0,\infty]$ is a {\it trap potential} which tends to keep the quantum particles confined in a bounded region (e.g. $W(x)=|x|^2$ or $W=\infty \1_\Lambda$ with $\Lambda\subset\R^d$ a compact subset of $\R^d$). An important task in quantum statistical mechanics is a complete description of such a particle system at zero (or very low) temperature in the many-particle limit $N\to\infty$ (or $\Lambda\to\R^d$).  The incentive for such a task is motivated by the quest for a rigorous understanding of the emergence of {\it Bose-Einstein condensation} (BEC), which is a physical phenomenon 
introduced in the work of S.N. Bose in 1924 and followed up by the predictions of Einstein in 1925. This phenomenon concerns a statistical description of the quanta of light: In a system of particles obeying Bose statistics and whose total number is conserved, there should be a temperature below which a finite fraction of all the particles ``condense" into the same one-particle state. In other words, a macroscopic portion of the atoms collapses at low temperatures into the lowest possible energy state so that the {\it wave function} of $N$ indistinguishable particles (or {\it bosons}) is solely described in terms of the one-particle wave function.


Rigorous mathematical pursuit pertinent to BEC started in 1940s with the 
works of Bogoliubov and Landau as well as Penrose and Feynman, see \cite{AB04a,AB04b} for a review. 
Another sound mathematical approach to analyze the above quantum system involves studying the
 system at ``dilute state"  on a particular scale: Physically, diluting the system corresponds to 
 keeping the particles confined in a bounded region (e.g. by the presence of a trap $W$ as above) 
 and maintaining the range of the inter-particle distance small compared to the mean particle distance, e.g. 
 for a suitable constant $\beta_N=\beta_N(d,v)\to 0$ as $N\to\infty$  the pair 
 interaction potential $v$  is replaced by its rescaled version 
$v_N(\cdot)=\beta_N^{-2} v(\cdot /\beta_N)$ leading to the rescaled Hamiltonian 
\begin{equation}\label{HN}
\begin{aligned}
 \mathscr H_N= - \sum_{i=1}^N \Delta_i+ \sum_{1\leq i < j \leq N} v_N(|x_i-x_j|) 
+ \sum_{i=1}^N W(x_i),
\end{aligned}
\end{equation}

In this regime, the ground state and its energy for $\mathscr H_N$ were studied 
in the many-particle limit $N\to\infty$ in a series of works of Lieb, Seiringer, Solovej  and Yngvason (\cite{LSY00a,LY01,LSY01}).
Indeed, let  
\begin{equation}\label{chiN}
\begin{aligned}
\chi_N&:=\frac 1 N \inf_{\heap{h\in H^1(\R^{dN})}{ \|h\|_{L^2}=1}} \, \big\langle h, \mathscr H_N h\big\rangle \\
&= \inf_{\heap{h\in H^1(\R^{dN})}{ \|h\|_{L^2}=1}} \bigg[ \sum_{i=1}^N\big(\|\nabla_ih\|_2^2+ \langle h^2, W(x_i)\rangle\big)\\
&\qquad\qquad\qquad\qquad+ \sum_{1\leq i < j \leq N}\big\langle h^2, v(|x_i-x_j|) \big\rangle\bigg]
\end{aligned}
\end{equation} 
be the {\it ground-state energy per particle}.  Then 
it was shown in the aforementioned works 
that in $d=2,3$ and under appropriate choice of $\beta_N=\beta_N(d,v)$\footnote{In the most physically relevant case $d=3$, $\beta_N$ is chosen to be $\beta_N=1/N$, see Remark \ref{rem-LSY}.} and suitable assumptions on $v$ and $W$ (see Remark \ref{rem-LSY}), the well-known {\it Gross-Pitaevskii formula} adequately describes the ground state energy $\chi_N$  in the limit $N\to\infty$:
\begin{equation}\label{GP-N}
\lim_{N\to\infty} \chi_N = \chi^{\mathrm{\ssup{GP}}}\stackrel{\mathrm{\ssup{def}}}= \inf_{\heap{\phi\in H^1(\R^d)}{\|\phi\|_2=1}} \bigg(\|\nabla\phi\|_2^2 + \langle W,\phi^2\rangle+ \frac {\alpha} 2 \|\phi\|_4^4\bigg) \\
\end{equation}
The variational formula on the right hand side is known as the {\it Gross-Pitaevskii} formula, which  was derived by Gross and Pitaevskii independently in 1961 based on the aforementioned work of Bogoliubov and Landau. In this variational formula, the pair-interaction function $v$ manifests only in the the parameter $\alpha=\alpha(v)$, known as the {\it scattering length}, see Remark \ref{rem-LSY}. \footnote{As per physical prediction, $\alpha$ is the only parameter defined by $v$ that persists in the limit $N\to\infty$ as a pre-factor in the quartic term of $\chi^{\mathrm{\ssup{GP}}}$.}
Furthermore, minimizers  $h_N^\star$ of the variational formula \eqref{chiN}  exist and are called 
the {\it ground states} of the Hamiltonian $\mathscr H_N$. Moreover, there is a unique 
minimizer $\phi^{\mathrm{\ssup{GP}}}=\phi^{\mathrm{\ssup{GP}}}(\alpha)$ of the 
Gross-Pitaevskii formula \eqref{GP-N} which is smooth, strictly positive and bounded (\cite[Theorem 2.1]{LSY00a}). It was also shown in \cite{LSY00a,LY01,LSY01} that 
$h_N^\star$ approaches the {\it product ground state} $(\phi_\alpha^{\mathrm{\ssup{GP}}})^{\otimes N}$ as $N$ gets large.

\subsection{\bf Gross-Pitaevskii formula and the ground-product state energy of $\mathscr H_N$}

 It turns out that the Gross-Pitaevskii formula is also approximated by the {\it ground product state energy} of $\mathscr H_N$ in the low temperature limit for large system size, which 
will now be explained in a probabilistic context. Let $B^{\ssup 1}, \dots, B^{\ssup N}$ denote $N$ independent Brownian motions in $\R^d$ with law $\P$ and a starting distribution which is suppressed from the notation. Then fix any {\it inverse temperature} $\beta>0$ and choose $v_N(\cdot)= N^{d-1} v(N\cdot)$ in the Hamiltonian $\mathscr H_N$ defined in \eqref{HN}, and set 
\begin{equation}\label{KN}
\begin{aligned}
\mathscr K_{N,\beta}&:= \frac 1 N \sum_{1\leq i<j \leq N} \frac 1 \beta \int_0^\beta \d s \int_0^\beta \d t N^d v\big(N|B^{\ssup i}_s- B^{\ssup j}_t|\big) \\
&\qquad\qquad\qquad\qquad+ \sum_{i=1}^N \int_0^\beta \d s \, W(B^{\ssup i}_s),
\end{aligned}
\end{equation}
and 
\begin{equation}\label{F}
\mathbf F(N,\beta):=  \log \E^\P\big[\e^{-\mathscr K_{N,\beta}}\big].
\end{equation}
Then it was shown in \cite[Theorem 1.7]{ABK06a} that for any $N\in \N$ and under suitable assumptions on $v$ and $W$ (see below), 
\begin{equation}\label{ABK1}
\begin{aligned}
\lim_{\beta\to\infty}\frac 1 {\beta N} \log\E^\P\big[\e^{-\mathscr K_{N,\beta}}\big]&=\lim_{\beta\to\infty}\frac 1 {\beta N} \mathbf F(\beta,N)\\
&= \frac 1 N \chi_N^{\ssup\otimes}
\end{aligned}
\end{equation}
where 
\begin{equation}\label{ABK2}
\begin{aligned}
&\chi_N^{\ssup\otimes}= \inf_{\heap{h_1,\dots, h_N}{\|h_i\|_2=1\,\forall i=1,\dots,N}}  \,\, \big\langle h_1\otimes\dots\otimes h_N, \mathscr H_N(h_1\otimes\dots\otimes h_N)\big\rangle \\
&\qquad= \inf_{\heap{h_1,\dots, h_N}{\|h_i\|_2=1\,\forall i=1,\dots,N}} \,\,\bigg[\sum_{i=1}^N\big(\|\nabla h_i\|_2^2+ \langle W, h_i^2\rangle\big)+ \sum_{1\leq i<j \leq N} \langle h_i^2, V_N h_j^2\rangle\bigg] 
\end{aligned}
\end{equation}
and for any function $h$, we wrote
\begin{equation}\label{VN}
(V_Nh)(x)= \int_{\R^d} v_N(|x-y|) h(y) \d y .
\end{equation}
 Note that the variational formula $\chi_N^\otimes$ can be interpreted as the ground state energy of the restriction of $\mathscr H_N$ to the set of $N$-fold {\it product states} $h_1\otimes\dots\otimes h_N$, we can conceive of $\chi_N^\otimes$ as the the {\it ground product state energy} of $\mathscr H_N$.  It was also shown in \cite[Theorem 1.14]{ABK06a} that the average ground product state energy $\frac 1 N\chi_N^\otimes$ approximates the aforementioned Gross-Pitaevskii formula in the large system limit  $N\to\infty$, see \cite[Theorem 1.14]{ABK06a}. Indeed, for any $d\in \{2,3\}$, 
 \begin{equation}\label{ABK3}
\lim_{N\to\infty}\bigg(\lim_{\beta\to\infty}\frac 1 {\beta N} \mathbf F(\beta,N)\bigg)= \lim_{N\to\infty}\bigg(\frac 1 N \chi_N^{\otimes}\bigg)= \chi^{\ssup{\mathrm{GP}}} 
\end{equation}
where $\chi^{\ssup{\mathrm{GP}}}$ is the Gross-Pitaevskii formula \eqref{GP-N} for $\alpha= \frac 1 {8\pi} \int_0^\infty v(r) \d r<\infty$. 
Given this context, it is natural to speculate if, and is conjectured in \cite{ABK06a} and \cite[p. 468]{ABK06b}, 
the above approximation continues to hold in the iterative limit 
$$
\lim_{\beta\to\infty}\lim_{N\to\infty} (\beta N)^{-1} \mathbf F(\beta,N),
$$ 
i.e. if the order of the limit $\beta\to\infty$ and $N\to\infty$ is reversed in \eqref{ABK3}. The first main goal of our present work is to provide a rigorous proof of this conjecture and show that 
$$
\begin{aligned}
\lim_{\beta\to\infty}\bigg(\lim_{N\to\infty}\frac 1 {\beta N} \mathbf F(\beta,N)\bigg)&= \lim_{N\to\infty}\bigg(\lim_{\beta\to\infty}\frac 1 {\beta N} \mathbf F(\beta,N)\bigg)\\
&=\chi^{\ssup{\mathrm{GP}}} 
\end{aligned}
$$
with $\chi^{\ssup{\mathrm{GP}}}$  defined in \eqref{GP-N} for $\alpha= \frac 1 {8\pi} \int_0^\infty v(r) \d r<\infty$. The precise statement can be found in Theorem \ref{thm1}. 

\subsection{\bf Ground product state energy of the Dirac interaction} 

Next we consider a further interesting choice of the interaction potential in the Hamiltonian $\mathscr H_N$ in \eqref{HN}, where we no longer choose the interaction as a function, but as a {\it measure}, while still preserving the singularity at zero. Indeed, the interaction in \eqref{KN} the rescaled potential $N^d v(N\cdot)$ converges weakly to the Dirac delta measure $\delta_0$ as $N\to\infty$. We will now consider a similar, but more singular interaction which contains such a Dirac measure already at {\it finite system size} $N$. 
In other words, we will consider
\begin{equation}\label{LN}
\begin{aligned}
\mathscr L_{N,\beta}=\frac 1 N  \sum_{1\leq i < j \leq N} \frac 1 \beta\int_0^\beta\int_0^\beta \d s \d t \,\,&\delta_0\big(|B^{\ssup i}_s- B^{\ssup j}_t|\big) \\
&\qquad+ \sum_{i=1}^N \int_0^\beta W(B^{\ssup i}_s)\d s,
\end{aligned}\end{equation}
see Section \ref{sec-results-2} for a precise formulation. We also remark that the factor $\frac 1 \beta$ before the double integral makes
the model interesting. Indeed, the double integral is of order $\beta^2$
for paths that intersect (possibly at different times), and the entropic cost for this
behavior is $\e^{-O(\beta)}$; it is relatively easy to suspect that such a behavior is typical
under the transformed measure $\frac 1 {Z_{N,\beta}} \e^{-\mathscr L_{N,\beta}} \d\P$. Hence, it is the factor $\frac 1 \beta$ that makes the energy and 
the entropy terms run on the same scale and still gives the paths enough freedom to fluctuate.

In this set up of singular interaction potential, it is natural to wonder if 
the (rescaled) free energy 
$$
\frac 1 {N\beta} \mathscr G_{N,\beta}:= \frac 1 {N\beta} \log \E\big[\e^{- \mathscr L_{N,\beta}}\big]
$$
converges to in the low temperature regime to the ground-product state energy of 
$$
\mathscr H_N^{\ssup{\delta_0}}=-\sum_{i=1}^N \Delta_i+ \sum_{i=1}^N W(x_i)+ \sum_{1\leq i < j \leq N} \delta_0(|x_i-x_j|).
$$
 Indeed, for any $N\in \N$ and $\lambda>0$, it was conjectured in \cite[Eq. (1.35)]{ABK06a}
$$
\begin{aligned}
\lim_{\beta\to\infty} \frac 1 {N\beta} \mathscr G_{N,\beta}=\frac 1 N \chi_N^{\ssup{\delta_0}}(\lambda) 
:= \frac 1 N &\inf_{\heap{h_1,\dots, h_N}{\|h_i\|_2=1\,\forall i=1,\dots,N}} \,\,\bigg[\sum_{i=1}^N\big(\|\nabla h_i\|_2^2\\
&\qquad\qquad+ \langle W, h_i^2\rangle\big)+ \sum_{1\leq i<j \leq N} \langle h_i^2,  h_j^2\rangle\bigg] 
\end{aligned}
$$
Our second main result, stated in Section \ref{sec-results-2}, contains as a  particular in particular a proof of this conjecture, see Theorem \ref{thm2}.

\section{Main results}\label{sec-results}

\subsection{\bf {Gross-Pitaevskii formula in the commutative limit}}\label{sec-results-1}

From now on we fix a spatial dimension $d\in \{2,3\}$. Recall the definition of the free energy $\mathbf F_{N,\beta}=\log \E\big[\e^{-K_{N,\beta}}\big]$ from \eqref{KN} for the rescaled 
pair interaction potential 
$$
v_N(|x|)= N^{d-1} v(N |x|).
$$

\begin{Assume}\label{assumption}
We impose the following assumptions on the trap potential $W$ and the interaction potential $v$, respectively. 

\begin{itemize}
\item Assume that $W:\R^d \to [0,\infty]$ is continuous in $\{W<\infty\}$ with $\lim_{R\to\infty} \, \inf_{|x|>R} W(x)=\infty$. Moreover, $\{W<\infty\}$ is either equal to $\R^d$ or is a bounded and connected open set. 

\item Let $v: [0,\infty]\to [0,\infty]$ be measurable, bounded from below and continuous on $\{v<\infty\}$. Moreover, $\sup\{r \geq 0\colon v(r)=\infty\}=0$ and $v\big|_{[\eta,\infty)}$ is bounded for all $\eta>0$. 
We are particularly interested in the singular case $v(0)=\infty$ (examples of such $v$ include super-stable potentials and potential of Lennard-Jones type \cite{R69}). Furthermore, we assume that 
\begin{equation}\label{v-integrable}
\int_{\R^d} v(|x|) \d x <\infty, \qquad \int_{\R^d} v(|x|)^2 \d x <\infty.
\end{equation}
Finally, there exists $\eps>0$ and a decreasing function $\widetilde v: (0,\eps) \to \R$ with $v \leq \widetilde v$ on $(0,\eps)$ such that 
\begin{equation}\label{v-Green}
\int_{B_\eps(0)} G(0,y) \widetilde v(|y|) \d y <\infty,
\end{equation}
where $G(0,y)=\int_0^\infty \frac 1 {(2\pi r)^{d/2}} \e^{-\frac{|x-y|^2}{2r}}\d r $ denotes the Green's function for the free Brownian motion in $\R^d$.
\end{itemize}

\end{Assume}

We are now ready to state our first main result.

\begin{Theorem}\label{thm1}
Fix $d\in \{2,3\}$ and let Assumption \ref{assumption} be satisfied. Then for $\alpha=\frac 1 {8\pi} \int_0^\infty v(r)\d r<\infty$, 
\begin{equation}
\begin{aligned}
\lim_{\beta\to\infty}\bigg(\lim_{N\to\infty}\frac 1 {\beta N} \mathbf F(\beta,N)\bigg)&= \lim_{N\to\infty}\bigg(\lim_{\beta\to\infty}\frac 1 {\beta N} \mathbf F(\beta,N)\bigg)\\
&=\inf_{\heap{\phi\in H^1(\R^d)}{\|\phi\|_2=1}} \bigg(\|\nabla\phi\|_2^2 + \langle W,\phi^2\rangle+ \frac {\alpha} 2 \|\phi\|_4^4\bigg)
=\chi^{\ssup{\mathrm{GP}}}. 
\end{aligned}
\end{equation}
\end{Theorem}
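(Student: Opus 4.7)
The iteration $\lim_N \lim_\beta$ is immediate from \cite{ABK06a}: \eqref{ABK1} gives $\lim_\beta \frac{1}{\beta N}\mathbf F(\beta, N) = \frac{1}{N}\chi_N^{\otimes}$ at fixed $N$, after which \eqref{ABK3} yields $\lim_N \frac{1}{N}\chi_N^{\otimes} = \chi^{\ssup{\mathrm{GP}}}$. The genuinely new content of Theorem \ref{thm1} is therefore the reverse iteration $\lim_\beta \lim_N$, which I would carry out in two stages.

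\textbf{Stage 1: inner limit $N\to\infty$ at fixed $\beta$.} Both the trap energy and the pair interaction are symmetric mean-field functionals of the $N$ independent Brownian paths, so it is natural to encode the free energy via the empirical path measure $L_N := \frac{1}{N}\sum_{i=1}^N \delta_{B^{\ssup i}} \in \mathcal P(C([0,\beta]; \R^d))$, which obeys Sanov's large deviation principle at speed $N$ with rate function $H(\cdot \,|\, \P_{[0,\beta]})$. Writing
\[
\tfrac{1}{N}\mathscr K_{N,\beta} \;=\; \langle L_N, \Psi_\beta\rangle + \tfrac{N-1}{2}\,\langle L_N\otimes L_N, \Phi_{N,\beta}\rangle + O(N^{-1}),
\]
with $\Psi_\beta(\omega) = \int_0^\beta W(\omega_s)\,ds$ and $\Phi_{N,\beta}(\omega,\omega') = \beta^{-1}\int_0^\beta\!\int_0^\beta v_N(|\omega_s - \omega'_t|)\,ds\,dt$, a Varadhan-type argument aims to identify
\[
\lim_{N\to\infty}\tfrac{1}{N}\mathbf F(\beta, N) = -\inf_{Q}\mathcal J_\beta(Q), \quad \mathcal J_\beta(Q) = H(Q|\P_{[0,\beta]}) + \langle Q,\Psi_\beta\rangle + \tfrac{\alpha}{2\beta}\int_0^\beta\!\!\int_0^\beta\!\!\int_{\R^d} q_s(x) q_t(x)\,dx\,ds\,dt,
\]
where $q_s$ denotes the time-$s$ marginal density of $Q$ and $\alpha$ is the coupling appearing in the statement. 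The upper bound on $\mathbf F$ is supplied by Jensen's inequality applied to a test product tilt of Wiener measure driven by (a scaled version of) $\phi^{\ssup{\mathrm{GP}}}$. The lower bound on $\mathbf F$ is the delicate direction and requires exponential tightness (from the confining behaviour of $W$) together with a $\Gamma$-type convergence of the $N$-dependent pair kernel $\Phi_{N,\beta}$ to the Hartree kernel with coupling $\alpha$, uniformly on finite-entropy sublevel sets of $Q$.

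\textbf{Stage 2: outer limit $\beta\to\infty$, and main obstacle.} Granting Stage 1, the convergence $\beta^{-1}\inf_Q \mathcal J_\beta(Q) \to \chi^{\ssup{\mathrm{GP}}}$ would follow from standard large-time asymptotics: a Donsker-Varadhan lower bound, leveraging the confinement of $W$, forces the time-averaged one-time marginal of an optimal $Q_\beta$ toward $|\phi^{\ssup{\mathrm{GP}}}|^2$, while the matching upper bound is provided by the ground-state transform of the Feynman-Kac semigroup with effective potential $W + \alpha|\phi^{\ssup{\mathrm{GP}}}|^2$, whose invariant density is exactly $|\phi^{\ssup{\mathrm{GP}}}|^2$. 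The principal obstacle lies in Stage 1: the kernel $v_N(\cdot) = N^{d-1} v(N\cdot)$ is simultaneously $N$-dependent and strongly singular at the origin --- a naive weak limit produces $v_N \rightharpoonup 0$ --- so the correct effective coupling $\alpha$ must be extracted from the anomalous contribution of pairs of paths coming within distance of order $1/N$, which is precisely the critical self-intersection regime in $d\in\{2,3\}$. The integrability hypotheses \eqref{v-integrable} and the Green-function bound \eqref{v-Green} in Assumption \ref{assumption} are precisely calibrated to make this short-distance renormalization quantitatively controllable against arbitrary finite-entropy $Q$, and this is what ultimately furnishes the $\Gamma$-convergence needed in Stage 1.
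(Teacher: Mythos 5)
Your overall architecture matches the paper's: the iteration $\lim_N\lim_\beta$ is imported verbatim from \cite{ABK06a} via \eqref{ABK1} and \eqref{ABK3}, and the new content is the reverse order, which splits into an inner large-$N$ limit at fixed $\beta$ and an outer zero-temperature limit. In Stage 1 you encode the problem via the empirical \emph{path} measure $L_N$ and Sanov's theorem, whereas the paper (following \cite{ABK06b}, Proposition \ref{propABK06b}) works with the averaged \emph{occupation} measure $\overline\mu_{N,\beta}=\frac1N\sum_i\mu_\beta^{\ssup i}$, a Cram\'er-type LDP at speed $N\beta$, and the rate functional $\mathscr J_\beta$ of \eqref{J}. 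The two are linked by the contraction principle, since both the trap term and (after the mollification/super-exponential approximation) the interaction term depend on $Q$ only through its time-averaged marginal; your $\frac{\alpha}{2\beta}\int_0^\beta\!\!\int_0^\beta\!\!\int q_sq_t$ collapses to $\frac{\alpha\beta}{2}\|\bar q\|_2^2$. So Stage 1 is a lifted version of Proposition \ref{propABK06b}, and you correctly flag that the genuinely hard content — the renormalization of the $N$-dependent singular kernel $v_N$ into the coupling $\alpha$ — is what \eqref{v-integrable} and \eqref{v-Green} are for; the paper defers this to \cite{ABK06b} just as you would.

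Where the proposal is thin is Stage 2, which is the actual new contribution of the paper. The paper makes this precise by proving the pointwise convergence $\lim_\beta\mathscr J_\beta(\mu)=\frac12\|\nabla\phi\|_2^2$ for $\mu=\phi^2\,\d x$ (Proposition \ref{propJ}). The upper-bound lemma does use the ground-state diffusion with generator $L^{\ssup\phi}=\frac12\Delta+\frac{\nabla\phi}{\phi}\cdot\nabla$ — this matches your ``ground-state transform'' idea — via Girsanov and the ergodic theorem. But the lower-bound lemma is not ``standard large-time asymptotics'': it runs the Legendre-transform definition \eqref{J} against the specifically chosen test function $f_c=-\frac{\Delta\phi}{2(c+\phi)}$, exploits that $\Psi(t,x)=c+\phi(x)$ is a stationary Feynman--Kac solution for the potential $f_c$ to get $\E_x[\e^{\int_0^\beta f_c(B_s)\d s}]\le (c+\phi(x))/c$ (hence sub-exponential growth), and then sends $c\to0$ so that $\langle f_c,\phi^2\rangle\to\frac12\|\nabla\phi\|_2^2$. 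Your proposed route of a ``Donsker--Varadhan lower bound forcing the marginal of an optimal $Q_\beta$ toward $|\phi^{\ssup{\mathrm{GP}}}|^2$'' is a different, global strategy; it could plausibly be made to work but would need additional equicoercivity/compactness to interchange $\inf_\phi$ with $\lim_\beta$, whereas the paper's per-$\mu$ argument sidesteps that and is essentially elementary once the right $f_c$ is written down. You should supply this (or an equivalent) explicit lower-bound mechanism; as written, Stage 2 is an assertion rather than a proof.

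Two minor points. First, the constant in the GP functional is $\|\nabla\phi\|_2^2$, not $\frac12\|\nabla\phi\|_2^2$; make sure your normalization of the kinetic term (generator $\frac12\Delta$ for Brownian motion versus $\Delta$ in $\mathscr H_N$) is tracked consistently through Stage 2. Second, beware that $v_N\rightharpoonup 0$ weakly, so your $O(N^{-1})$ bookkeeping in the decomposition of $\tfrac1N\mathscr K_{N,\beta}$ must be done \emph{after} the pairing with $L_N\otimes L_N$ is renormalized; the proposal currently writes $\Phi_{N,\beta}$ as if it had an $N$-independent weak limit, which it does not.
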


Let us underline the relevance of Theorem \ref{thm1} in the present context. As already remarked earlier, the zero-temperature limit of the rescaled free energy $\frac 1 N\mathbf F_{N,\beta}$ is described by the product ground state energy $\chi^{\ssup\otimes}_N$. The replacement of the ground state energy $\chi_N$ (defined in \eqref{chiN}) by its product state counterpart $\chi_N^{\ssup\otimes}$ is known as the {\it Hartree-Fock approach} and the variational formula $\chi_N^{\otimes}$ is called the {\it Hartree formula}, see the physics monograph \cite[Ch. 12]{DvN05}. 
The limiting assertion \eqref{ABK3} then shows  that the Hartree formula too approximates the Gross-Pitaveskii limit as $N\to\infty$. In this context, Theorem \ref{thm1} underlines that this approximation of the Gross-Pitaevskii formula is {\it stable under the iteration of the limits $\beta\to\infty$ and $N\to\infty$}, i.e. the diagram 

\begin{tikzpicture}
  \matrix (m) [matrix of math nodes,row sep=5em,column sep=8em,minimum width=4em]
  {
     \frac 1{N\beta} \mathbf F_{N,\beta} & \chi^{\ssup\otimes}_N \\
     \chi^{\ssup\otimes}(\beta) & \chi^{\ssup{\mathrm{GP}}} \\};
  \path[-stealth]
   (m-1-1)  edge [left] node  {$\,\,\, N\to\infty$} (m-2-1)
            edge [right] node [above] {$\beta\to\infty$} (m-1-2)
    (m-2-1.east|-m-2-2) edge node [below] {$\beta\to\infty$} (m-2-2)
    (m-1-2) edge node [right] {$N\to\infty$} (m-2-2);
            \end{tikzpicture}

\noindent actually commutes, see \eqref{chibeta} for the definition of the large scale limit $\chi^{\otimes}(\beta)$ at positive temperature.  Such an assertion also reconfirms the physical intuition that the Hartree model is a suitable ansatz for a rigorous understanding of the so-called {\it trace formula} of the {\it canonical ensemble model}, which is akin to the Hartree model discussed so far. Indeed, the bottom of the spectrum of $\mathscr H_N$ can also be obtained by trace of $\e^{-\beta\mathscr H_N}$, i.e. $\frac 1N\chi_N=-\lim_{\beta\to\infty} \frac1 {N\beta} \log\mathrm{Tr}[\e^{-\beta \mathscr H_N}]$, see Ginibre \cite{G71}. Now the Feynman-Kac formula (for traces) implies that $\mathrm{Tr}[\e^{-\beta \mathscr H_N}]= \int \d x \E_x^{\ssup\beta}\big[\e^{-\beta\langle \mathcal W+ \mathcal V, \mu_\beta\rangle}\big]$ where $\E^{\ssup\beta}_x$ denotes expectation w.r.t. a Brownian bridge $\bar B_s=(B^{\ssup1}_s,\dots,B^{\ssup N}_s)$ in $\R^{dN}$ pinned at $x$ in the time interval $[0,\beta]$, $\mu_\beta=\frac 1 \beta\int_0^\beta \delta_{\bar B_s}\d s$ and $\mathscr W(x)=\sum_{i=1}^N W(x_i)$ and $\mathcal V(x)=\sum_{i<j} v(|x_i-x_j|)$. Thus (see also \cite[Theorem 1.5]{ABK06a}) 
$$
\frac 1 N\chi_N= \lim_{\beta\to\infty}  \frac1 {N\beta} \log \E\bigg[\e^{-\sum_{1\leq i < j \leq N} \int_0^\beta \d s \,  v(|B^{\ssup i}_s- B^{\ssup j}_s|) + \sum_{i=1}^N\int_0^\beta W(B^{\ssup i}_s)}\bigg]. 
$$
The logarithm of the expectation on the right hand side above is called the free energy of the {\it canonical ensemble model}, which is closely related to the Hartree model defined in \eqref{KN}, except for that the pair-potential of the Hartree models captures interactions of the {\it trajectories}, while the canonical ensemble model above is defined via interactions  of the {\it particles}. However, like the Hartree model, the {\it zero temperature} limit of the rescaled free energy of the canonical ensemble model above also converges to the Gross-Pitaevskii formula as $N\to\infty$, recall \eqref{GP-N}. On the other hand, for a {\it fixed positive temperature}, investigation of the large systems limit of this free energy is an important open problem. However, the stability of the approximation in Theorem \ref{thm1} is an instructive rigorous step towards a full understanding of the desired limiting scheme 
$$\lim_{\beta\to\infty}\bigg(\lim_{N\to\infty} \frac1 {N\beta} \log\mathrm{Tr}[\e^{-\beta \mathscr H_N}]\bigg)
$$ 
of the canonical ensemble model.

\subsection{\bf Ground product states for the Dirac interaction $\mathscr H_N^{\ssup{\delta_0}}$ in the low temperature regime $\beta\to\infty$}\label{sec-results-2}

\medskip

\noindent We will now state our main results concerning the interaction $\mathscr L_{N,\beta}$ defined in \eqref{LN}. As already remarked, this is only a formal expression and a precise meaning is given by the {\it Brownian intersection local time} which is defined as follows. Fix $1\leq i<j\leq N$ and consider the process $X^{\ssup{i,j}}_{st}=B^{\ssup{i}}_s-B^{\ssup{j}}_t$, the so-called {\em confluent Brownian motion\/} of $B^{\ssup{i}}$ and $B^{\ssup{j}}$. It is known \cite[Th.~1]{GHR84} that this two-parameter process possesses a {\it local time}, i.e., there is a random process $(L^{\ssup{i,j}}_\beta(x))_{x\in\R^d}$ such that, 
$x\mapsto L_\beta^{\ssup{i,j}}(x)$ may be chosen to be continuous and moreover, 
for any bounded and measurable function $f\colon\R^d\to \R$, 
\begin{equation}\label{intersection}
\begin{aligned}
\int_{\R^d}f(x)L^{\ssup{i,j}}_\beta(x)\,\d x&=\frac 1{\beta^2}\int_0^\beta\d s\int_0^\beta\d t\, f\bigl(B_s^{\ssup{i}}-B_t^{\ssup{j}}\bigr)\\
&=\int_{\R^d}\int_{\R^d}\mu_\beta^{\ssup{i}}(\d x)\mu_\beta^{\ssup{j}}(\d y) f(x-y). 
\end{aligned}
\end{equation}
where 
\begin{equation}\label{mu}
\mu_\beta^{\ssup i}=\frac 1 \beta\int_0^\beta \delta_{B^{\ssup i}_s} \d s\in \mathcal M_1(\R^d)
\end{equation}
 is the normalized empirical measure (or the {\it occupation measure}) of the $i^{\mathrm{th}}$ Brownian motion, which is a random element 
 of the space $\mathcal M_1(\R^d)$ of probability measures on $\R^d$.  Both the empirical measures $\mu_\beta^{\ssup i}$ as well as the 
 intersection local time $L^{\ssup{i,j}}$ will play useful r\^oles later on in the article. 
 
 Note also that the aforementioned continuity property, $L^{\ssup{i,j}}_\beta(0)$ is well-defined, which is just the (normalized) amount of interaction of the trajectories $B^{\ssup i}$ and $B^{\ssup j}$ until time $\beta$. Hence, we rewrite \eqref{LN} precisely as 
\begin{equation}\label{LN2}
\begin{aligned}
\mathscr L_{N,\beta}=\frac \beta N  \sum_{1\leq i < j \leq N} L^{\ssup{i,j}}(0)+
\sum_{i=1}^N \int_0^\beta W(B^{\ssup i}_s)\d s. 
\end{aligned}\end{equation}
Recall that $\mathscr G_{N,\beta}=\log \E\big[\e^{-\mathscr L_{N,\beta}}\big]$. Given \eqref{ABK1} and the results stated in Section \ref{sec-results-1}, a natural question is to determine the   rescaled free energy $\lim_{\beta}\frac 1 {N\beta} \mathscr G_{N,\beta}$ in the zero-temperature limit for any fixed $N\to\infty$. Such a task was conjectured to be true also in \cite[Eq. 1.35]{ABK06a}. Our next main result proves this conjecture.

\begin{Theorem}\label{thm2}
Suppose that $W$ satisfies the conditions imposed in Assumption \ref{assumption}. Then for any fixed $N\in \N$, 
\begin{equation}\label{eq-thm2}
\begin{aligned}
\lim_{\beta\to\infty} \frac 1 {N\beta} \mathscr G_{N,\beta}= \frac 1 N\chi_N^{\ssup{\delta_0}}(\lambda) 
:= \frac 1 N &\inf_{\heap{h_1,\dots, h_N}{\|h_i\|_2=1\,\forall i=1,\dots,N}} \,\,\bigg[\sum_{i=1}^N\big(\|\nabla h_i\|_2^2\\
&\qquad\qquad+ \langle W, h_i^2\rangle\big)+ \sum_{1\leq i<j \leq N} \langle h_i^2,  h_j^2\rangle\bigg] 
\end{aligned}
\end{equation}
\end{Theorem}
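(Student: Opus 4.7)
I would combine the Donsker--Varadhan LDP for the tuple of occupation measures $(\mu_\beta^{\ssup{1}},\ldots,\mu_\beta^{\ssup{N}})$ with a smooth regularisation of the Dirac interaction, and then send the regularisation parameter to zero after the low-temperature limit. Fix $N\in\N$. Independence of the Brownian motions yields a product LDP at speed $\beta$ with rate function $\sum_{i=1}^N I(\mu^{\ssup{i}})$, where $I(\mu)=\|\nabla\psi\|_2^2$ if $\mu=\psi^2\,\d x$ with $\psi\in H^1(\R^d)$ and $\|\psi\|_2=1$, and $I(\mu)=+\infty$ otherwise. The growth assumption on $W$ in Assumption~\ref{assumption} supplies the exponential tightness needed for a full LDP on $\mathcal M_1(\R^d)$ and makes $\mu\mapsto\langle W,\mu\rangle$ lower semicontinuous, so the trap contribution may be absorbed into the effective rate function via Varadhan's lemma.

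To bypass the discontinuity of $(\mu^{\ssup{i}},\mu^{\ssup{j}})\mapsto L^{\ssup{i,j}}(0)$ in the weak topology, I would introduce a smooth mollifier $\rho_\epsilon$ (say, a Gaussian kernel of variance $\epsilon$) and replace the Dirac interaction by
$$
L_\epsilon^{\ssup{i,j}}:=\int_{\R^d}\int_{\R^d}\rho_\epsilon(x-y)\,\mu^{\ssup{i}}(\d x)\,\mu^{\ssup{j}}(\d y),
$$
which is bounded and weakly continuous on pairs of probability measures. Writing $\mathscr L_{N,\beta,\epsilon}$ for the regularised version of \eqref{LN2} and $\mathscr G_{N,\beta,\epsilon}:=\log\E[\e^{-\mathscr L_{N,\beta,\epsilon}}]$ for its free energy, Varadhan's lemma then identifies
$$
\lim_{\beta\to\infty}\frac{1}{N\beta}\mathscr G_{N,\beta,\epsilon}=-\frac{1}{N}\chi_{N,\epsilon}^{\ssup{\delta_0}},
$$
where $\chi_{N,\epsilon}^{\ssup{\delta_0}}$ is obtained from the right-hand side of \eqref{eq-thm2} upon substituting $\langle h_i^2,h_j^2\rangle$ by $\langle h_i^2,\rho_\epsilon\ast h_j^2\rangle$. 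Since $d\in\{2,3\}$, the Sobolev embedding $H^1(\R^d)\hookrightarrow L^4(\R^d)$ gives $h_i^2\in L^2(\R^d)$, so $\rho_\epsilon\ast h_j^2\to h_j^2$ in $L^2$ as $\epsilon\to0$; a diagonal argument on near-optimal sequences then delivers $\chi_{N,\epsilon}^{\ssup{\delta_0}}\to\chi_N^{\ssup{\delta_0}}$.

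The main obstacle is the uniform-in-$\beta$ comparison
$$
\lim_{\epsilon\to0}\limsup_{\beta\to\infty}\frac{1}{N\beta}\bigl|\mathscr G_{N,\beta}-\mathscr G_{N,\beta,\epsilon}\bigr|=0.
$$
By the occupation-time identity $L_\epsilon^{\ssup{i,j}}=\int\rho_\epsilon(x)L^{\ssup{i,j}}(x)\,\d x$ together with the $\P$-a.s.\ continuity of $x\mapsto L^{\ssup{i,j}}(x)$ at the origin in $d\in\{2,3\}$ (cf.~\cite{GHR84}), one has $L_\epsilon^{\ssup{i,j}}\to L^{\ssup{i,j}}(0)$ pathwise, and a dominated-convergence argument under the Gibbs measure already settles one of the two inequalities. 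The opposite direction is delicate: one needs a concentration estimate for the error $L^{\ssup{i,j}}(0)-L_\epsilon^{\ssup{i,j}}$ that is uniform in~$\beta$, which can be assembled from the Green-function integrability condition \eqref{v-Green} combined with standard $p$-th moment estimates for Brownian intersection local time in $d\leq 3$. It is exactly at this point that the dimensional restriction $d\in\{2,3\}$ enters the argument, since in $d\geq 4$ the pointwise quantity $L^{\ssup{i,j}}(0)$ ceases to exist.
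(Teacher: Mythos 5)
Your blueprint — mollify the pair interaction, run an LDP/Varadhan argument for the regularised model, then remove the mollification by an exponential-approximation plus a $\Gamma$-limit of the variational problems — is structurally the same as the paper's. Where the paper works with the intersection \emph{measure} $\ell_\beta$ (since the LDP of [KM11] lives at that level), you work directly with the scalar $L^{\ssup{i,j}}_\beta(0)$; for Theorem~\ref{thm2}, where only the total mass is needed, that is a reasonable simplification. The point where the two diverge in substance is your treatment of the exponential approximation.

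The quantity
$\lim_{\epsilon\to 0}\limsup_{\beta\to\infty}\frac{1}{N\beta}\bigl|\mathscr G_{N,\beta}-\mathscr G_{N,\beta,\epsilon}\bigr|=0$
is not something you can settle ``in one direction'' by a.s.\ convergence $L_\epsilon^{\ssup{i,j}}\to L^{\ssup{i,j}}(0)$ plus dominated convergence under the Gibbs measure. That argument controls the difference at each fixed $\beta$, but it does not control the limits in the prescribed order, since the Gibbs measure itself moves with $\beta$ and the free energy is on the exponential scale $\beta$. What is actually needed is a \emph{superexponential} estimate in the sense of exponential approximation for LDPs (Dembo--Zeitouni), i.e.
\[
\lim_{\epsilon\downarrow 0}\limsup_{\beta\to\infty}\frac{1}{\beta}\log\P\Bigl[\beta^{-2}\bigl|\langle f,\ell_\beta-\ell_{\epsilon,\beta}\rangle\bigr|>a\Bigr]=-\infty\quad\text{for all }a>0,
\]
and this is the heart of the paper's proof (Lemma~\ref{lemma:superexp}). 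It is established via a nontrivial argument: Chebyshev on the $k$-th moment, randomising the time horizon by an independent exponential $\tau$ to decouple Brownian scaling, Parseval to push the computation to Fourier space, time-ordering via the Markov property to evaluate the $k$-fold integral, a symmetrisation/Jensen step over $\mathfrak S_k$, and a combinatorial split of $(\R^d)^k$ into ``many large frequencies'' versus ``many small increments.'' Invoking ``standard $p$-th moment estimates for Brownian intersection local time'' does not substitute for this; in particular, a moment bound that grows like $k!^2 C^k$ would be exactly borderline, and the whole point of the Fourier-side argument is to extract the extra decay in $\epsilon$ that makes the rate $-\infty$.

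Two smaller but genuine errors. First, the Green-function integrability hypothesis \eqref{v-Green} concerns the interaction $v$ and plays no role in Theorem~\ref{thm2}, whose assumptions refer only to $W$; invoking it here is a misattribution. Second, the place where $d\in\{2,3\}$ actually enters the paper's superexponential estimate is the integrability $\int_{\R^d}(1+|\lambda|^2)^{-2}\,\d\lambda<\infty$ in \eqref{integrability}, which requires $d<4$; the nonexistence of $L^{\ssup{i,j}}_\beta(0)$ in $d\ge 4$ is a consequence of the same phenomenon, but it is not the operative mechanism in the estimate. Filling in the superexponential lemma — or citing a result that supplies it, such as \cite{KM11} adapted from the compact to the free setting as the paper does — is what would turn your outline into a proof.
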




\subsection{\bf Relevant remarks and existent literature}\label{sec-remarks}

\begin{Rem}[On the Assumptions on $v$ and $W$]
The continuity assumptions on $W$ and $v$ are needed for the proof of the limiting upper bound for \eqref{ABK1} as $\beta\to\infty$ which relies on large deviation arguments and needs continuity of the map $\mathcal M_1\ni \mu\mapsto \langle \mu, W\wedge M+ (v \circ |\cdot|)\wedge M)$ in the weak topology. Moreover, the requirement \eqref{v-Green} is required for the lower bound for \eqref{ABK1}, see \cite[Remark 1.8]{ABK06a}. 

\end{Rem}

\begin{Rem}[On dimensions $d\in \{2,3\}$]\label{rem-d}
For any fixed system size $N\in \N$, the zero temperature limit \eqref{ABK1} was shown to hold in any dimension $d\geq 1$. Now recall that the limit $N\to\infty$ in \eqref{ABK3} holds under the rescaling $v_N= N^{d-1} v(N\cdot)$ leading to \eqref{KN}. Since $N^d v(N\cdot)$ is an approximation of the Dirac measure at $0$, 
the double integral $\int_0^\beta\int_0^\beta N^d v\big( N|B^{\ssup i}_s-B^{\ssup j}_t|\big)$ is an approximation of the {\it intersection local time} 
$\int_0^\beta\int_0^\beta \delta_0(|B^{\ssup i}_s-B^{\ssup j}_t|)$ of $B^{\ssup i}$ and $B^{\ssup j}$, and thus the convolution integral w.r.t $V_N$ (recall \eqref{VN} and \eqref{ABK2}) also converges formally to the quartic term in the Gross-Pitaevskii formula. The intersection local time is a measure which is supported on the set of (mutual) intersections of $B^{\ssup i}$ and $B^{\ssup j}$ and can be defined rigorously in $d\in \{2,3\}$. This measure also manifests in the limit $N\to\infty$ of $\frac 1 N \mathbf F(N,\beta)$ for a fixed positive temperature $\beta$ in the variational formula $\chi^{\ssup\otimes}(\beta)$, see \eqref{chibeta} and the discussion that follows. 
\end{Rem}

\begin{Rem}[The scattering length]\label{rem-LSY}
Recall that the approximation \eqref{GP-N} was shown in \cite{LSY00a,LY01,LSY01} in $d\in \{2,3\}$ assuming that $v\geq 0$ with $v(0)>0$ and $\int_{1}^\infty v(r) r^{d-1} \d r<\infty$ and choosing $v_N(\cdot)= \beta_N^{-2} v(\cdot \beta_n^{-1})$ where $\beta_N=1/N$ in $d=3$. In $d=2$, 
$\beta_N=(\widetilde\alpha(v)^{-1} \|\phi^{\ssup{\mathrm{GP}}}\|_4^{-2}) N^{1/2}\e^{-N/2\widetilde\alpha(v)}$ where $\phi^{\ssup{\mathrm{GP}}}$ is the unique minimizer of the Gross-Pitaevskii formula for $\alpha=\widetilde\alpha(v)$ is the {\it scattering length} which is defined as follows. 
In $d=3$, $\widetilde\alpha(v)=\lim_{r\to\infty}\big[r- \frac {u(r)}{v(r)}\big]\in (0,\infty)$ with $u^{\prime\prime}=\frac 12 uv$ on $(0,\infty)$ and $u(0)=0$. In $d=3$, it is known that 
$\widetilde\alpha(v) < \frac 1 {8\pi} \int_0^\infty v(r) \d r$ (recall Theorem \ref{thm1} and note that the latter integral is also referred to as the {\it first Born approximation of the scattering length} of $v$, see \cite{LSSY05}).   In $d=2$, if $v$ has compact support in $[0,R_\star]$ then 
the scattering length is defined as $\log\widetilde\alpha(v)= \frac{\log r- u(r)\log R}{1-u(r)}$ for $r \in (R,R_\star)$ with $u^{\prime\prime}=\frac 12 uv$ on $[0,R]$ with $u(R)=1$ and $u(0)=0$. In case $v$ does not have compact support, the scattering length is defined as a limit obtained from approximating $v$ by a compactly supported function. 
\end{Rem}

\begin{Rem}[Reduced density matrix and BEC]
We recall the variational formula $\chi_N$ defined in \eqref{chiN}. Then $\chi_N$ possesses a unique minimizer $h^\star_N$ which defines the so-called {\it reduced density matrix} as $\gamma_N(x,y):=\int_{(\R^d)^{N-1}} h_N^\star(x,x_2,\dots,x_N) h_N^\star(y,x_2,\dots,x_N)\d x_2\dots \d x_N$. If $\phi^{\ssup{\mathrm{GP}}}$ is the unique minimizer of the Gross-Pitaevskii formula \eqref{GP-N} (with the scattering length $\alpha=\widetilde\alpha(v)$ defined above) then it was also shown in \cite{LSY00a,LY01,LSY01} that 
$\lim_{N\to\infty} \gamma_N= \phi^{\ssup{\mathrm{GP}}} \otimes \phi^{\ssup{\mathrm{GP}}}$ in the trace norm. The latter assertion also implies that the reduced density matrix has an eigenvalue of order $1$, underlining also the emergence of the Bose-Einstein condensation. 

\end{Rem}

\begin{Rem}[Absence of a trap]
So far the results have been stated and proved in the presence of a trap term $W$. In this setting, the probabilistic approaches used in \cite{ABK06a} are based on
applying Donsker-Varadhan theory of large deviations for the distribution of Brownian occupation measures which will no longer hold true if we assume $W\equiv 0$. 
In this case, since all the models which have been discussed are shift-invariant functionals of the occupation measures $(\mu^{\ssup i}_\beta,\mu^{\ssup j}_\beta)$ for $i\ne j$, 
similar statements can be derived using the theory developed recently in \cite{MV14} pertaining to compactification of orbits spaces of probability measures and large deviation theory therein. 
\end{Rem}

\section{Proof of Theorem \ref{thm1}}\label{sec-pf-thm1}

The proof of Theorem \ref{thm1} will be carried out in three main steps. First we will set up some relevant notation and collect some preliminary facts which will be useful in the sequel. 
Let $C_b(\R^d)$ denote the space of continuous and bounded functions, while $\mathcal M_1(\R^d)$ denotes the space of probability measures on $\R^d$.  For any $\beta>0$, let us first introduce the energy functional $\mathscr J_\beta: \mathcal M_1(\R^d)\to [0,\infty]$ defined by
\begin{equation}\label{J}
\mathscr J_\beta(\mu)= \sup_{f\in C_b(\R^d)} \bigg[ \langle \mu, f\rangle - \frac 1 \beta \log \E\big(\e^{\int_0^\beta f(B_s)\d s }\big)\bigg].
\end{equation} 
For any $\alpha, \beta>0$, now let us define the variational formula 
\begin{equation}\label{chibeta}
\chi^{\ssup\otimes}(\beta)= \inf_{\heap{\phi\in L^2(\R^d)\cap L^4(\R^d)}{\|\phi\|_2=1}} \bigg[ \mathscr J_\beta(\phi^2) + \langle W,\phi^2\rangle+ \frac \alpha 2 \|\phi\|_4^4 \bigg].
\end{equation} 
In the formula above, we wrote $\mathscr J(\phi^2)= \mathscr J(\mu)$ for $\mu(\d x)=\phi^2(x)\d x$ for $\|\phi\|_2=1$. 

The three terms in the variational formula $\chi^{\ssup\otimes}(\beta)$ can be interpreted as follows. The term $\langle W,\phi^2\rangle$ is the energy gained by the paths for staying constrained in a bounded region enforced by the trap potential $W$. The quartic term $\int \phi^4$ is a manifestation of the limiting effective interaction captured by the aforementioned intersection local time $\int_0^\beta\int_0^\beta \delta_0(|B^{\ssup i}_s- B^{\ssup j}_t|)$ 
coming from $N^d v(N\cdot)\Rightarrow \delta_0$, recall Remark \ref{rem-d}. The energy functional $\mathscr J_\beta$ is a (relative) entropy term, which can be read off naturally as the optimal cost paid by the averages of the empirical distributions of the Brownian paths which satisfy a Cram\' r type large deviation principle with a rate function which is 
the Fenchel-Legendre transformation of the logarithmic moment generating functional the empirical measure of a single Brownian path, see the proof of Proposition \ref{propABK06b} below. 

The rest of Section \ref{sec-pf-thm1} is denoted to the roof of Theorem \ref{thm1}. We will recall the following useful fact, which follows standard arguments in large deviation theory. \begin{Lemma}\label{lemma-phi}
There exists $\mu\in \mathcal M_1(\R^d)$ such that $\mathscr J_\beta(\mu)<\infty$. 
Moreover, $\mathscr J_\beta(\mu)=\infty$ unless $\mu$ is absolutely continuous w.r.t. the Lebesgue measure with density $\phi^2(x)=\frac{\d\mu}{\d x}$ and $\phi\in H^1(\R^d)$ and $\|\phi\|_2=1$. Finally, there exists a unique strictly positive minimizer 
$\phi_\star\in L^4(\R^d) \cap L^2(\R^d)$ of the variational formula \eqref{chibeta}. 
\end{Lemma}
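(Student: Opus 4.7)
The three assertions are (i) exhibiting a measure in the effective domain of $\mathscr J_\beta$, (ii) characterising that domain, and (iii) existence, uniqueness and positivity of a minimiser of \eqref{chibeta}.

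The plan for (i) and (ii) is to sandwich $\mathscr J_\beta$ between two bounds obtained from the Feynman-Kac representation, so as to compare it with the Donsker-Varadhan rate function
\begin{equation*}
I(\mu) \;=\; \begin{cases} \|\nabla\phi\|_2^2 & \text{if } \mu = \phi^2\,\d x \text{ with } \phi\in H^1(\R^d),\ \|\phi\|_2 = 1, \\ +\infty & \text{otherwise}\end{cases}
\end{equation*}
(constants normalised to the Brownian convention used in the paper). For (i), pick $\phi\in C_c^\infty(\R^d)$ with $\|\phi\|_2=1$: the standard Rayleigh-Ritz / Donsker-Varadhan lower bound
\begin{equation*}
\tfrac{1}{\beta}\log \E\bigl[\e^{\int_0^\beta f(B_s)\,\d s}\bigr] \;\geq\; \langle \phi^2, f\rangle - \|\nabla\phi\|_2^2 + O(\beta^{-1})\qquad\forall f\in C_b(\R^d)
\end{equation*}
(obtained by restricting the Feynman-Kac sup to the single trial density $\phi^2$) immediately gives $\mathscr J_\beta(\phi^2\,\d x) \leq \|\nabla\phi\|_2^2 + O(\beta^{-1}) < \infty$. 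For (ii), the matching upper bound from spectral theory,
\begin{equation*}
\tfrac{1}{\beta}\log \E\bigl[\e^{\int_0^\beta f(B_s)\,\d s}\bigr] \;\leq\; \lambda(f) + \tfrac{c_f}{\beta},
\end{equation*}
with $\lambda(f)$ the top of the spectrum of the Schr\"odinger operator with potential $f$ and $c_f$ coming from testing the starting distribution against the Perron ground state, yields after taking $\sup_{f\in C_b}$ the estimate $\mathscr J_\beta(\mu) \geq I(\mu) - C/\beta$, via the Donsker-Varadhan duality $I(\mu) = \sup_f[\langle \mu, f\rangle - \lambda(f)]$. Hence $\mathscr J_\beta(\mu) < \infty$ forces $I(\mu) < \infty$, which is precisely the structural conclusion claimed in (ii). The main technical hurdle is making the constant $c_f/\beta$ uniform over the $\sup$ in~\eqref{J}; this is the standard ground-state / Perron-Frobenius argument for the Feynman-Kac semigroup.

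For (iii), apply the direct method to $F(\phi) := \mathscr J_\beta(\phi^2) + \langle W, \phi^2\rangle + \tfrac{\alpha}{2}\|\phi\|_4^4$. Given a minimising sequence $(\phi_n)$ with $\|\phi_n\|_2 = 1$: (a) the sandwich $\mathscr J_\beta(\phi_n^2) \geq \|\nabla\phi_n\|_2^2 - C/\beta$ from the previous paragraph together with the uniform bound $F(\phi_n)\leq C$ yields a uniform $H^1$ bound on $\phi_n$; (b) the trap bound $\langle W, \phi_n^2\rangle \leq C$ combined with the coercivity $W(x) \to \infty$ as $|x|\to\infty$ (Assumption \ref{assumption}) gives tightness of $(\phi_n^2\,\d x)$ via Markov's inequality; (c) the $L^4$ bound is immediate. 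Passing to a weakly convergent subsequence $\phi_n \rightharpoonup \phi_\star$ in $H^1\cap L^4$ with $\|\phi_\star\|_2 = 1$ (the last point from tightness, precluding loss of mass at infinity), weak lower semicontinuity of each of the three summands of $F$ --- $\mathscr J_\beta(\cdot)$ as a supremum of weakly continuous linear functionals of $\mu$, $\langle W, \cdot\rangle$ by Fatou (since $W\geq 0$), and $\|\cdot\|_4^4$ by standard Banach-space arguments --- gives $F(\phi_\star) \leq \liminf_n F(\phi_n)$, so $\phi_\star$ is a minimiser.

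Uniqueness and strict positivity of $\phi_\star$ follow by re-parametrising via $\mu = \phi^2\,\d x$: the functional $\mu \mapsto \mathscr J_\beta(\mu) + \langle W, \mu\rangle + \tfrac{\alpha}{2}\int(\d\mu/\d x)^2\,\d x$ is strictly convex on absolutely continuous probability measures, because $\mathscr J_\beta$ is convex (as a Fenchel dual), $\langle W, \cdot\rangle$ is linear, and $\int(\d\mu/\d x)^2\,\d x = \|\phi\|_4^4$ is strictly convex in $\mu$ (the map $\rho \mapsto \int \rho^2\,\d x$ being strictly convex along non-trivial line segments of densities). Thus $\mu_\star$ is unique, and hence $\phi_\star := \sqrt{\mu_\star} \geq 0$ is unique as well. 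Strict positivity of $\phi_\star$ then follows from the strong maximum principle applied to the Euler-Lagrange equation, a nonlinear Schr\"odinger-type equation with non-negative potential terms $W$ and $\alpha \phi_\star^2$. The main delicate step throughout is the uniform spectral bound underlying (ii); once that is established, the remainder is standard variational analysis.
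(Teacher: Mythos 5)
The paper itself does not prove Lemma~3.1: it omits the argument and points to \cite[Lemmas 1.2, 2.1, 2.2]{ABK06b}.  So you should be compared against the standard argument, and there your proposal has one real gap and one hand-wavy step worth flagging.

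The substantive gap is in part (ii). Your strategy is to prove $\mathscr J_\beta(\mu)\geq I(\mu)-C/\beta$ for a \emph{universal} constant $C$, by establishing $\Lambda_\beta(f):=\tfrac1\beta\log\E\big[\e^{\int_0^\beta f(B_s)\d s}\big]\le\lambda(f)+c_f/\beta$ and then somehow removing the $f$-dependence of $c_f$ inside the supremum over $f$.  This cannot be done: $c_f$ is genuinely unbounded over $f\in C_b(\R^d)$.  A concrete family showing this in $d=3$: take $f=M\mathbf 1_{B_r(0)}$ (smoothed), with $M=c'/r^2$ and $c'$ slightly below the exponential-integrability threshold of the total occupation time of $B_r(0)$.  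Then the Donsker-Varadhan $\lambda(f)$ stays equal to $0$, while $\Lambda_\beta(f)=\tfrac1\beta\log\E[\e^{MT_r}]$ where $T_r$ is the total time in $B_r$, and this is $\tfrac1\beta\log C(c')$ with $C(c')\to\infty$ as $c'$ approaches the threshold.  Thus $c_f=\beta\big(\Lambda_\beta(f)-\lambda(f)\big)$ blows up along this family, and the inequality $\mathscr J_\beta\geq I-C/\beta$ you rely on does not hold.  (There is also a structural reason to be suspicious of the route via spectral theory for $-\tfrac12\Delta-f$ on all of $\R^d$: for $f\in C_b$ the top of the spectrum is typically in the essential spectrum, there is no $L^2$ Perron ground state to ``test the starting distribution against'', and $\mathbf 1\notin L^2(\R^d)$ so the usual Feynman-Kac/spectral sandwiching has to be replaced by something else.)  The correct route to the domain characterisation (this is what \cite{ABK06b} does) is to work directly with the definition of $\mathscr J_\beta$ and plug in a well-chosen, $\mu$-adapted family of test functions $f$ — essentially $f=-\tfrac12\Delta\psi/\psi$ for suitable strictly positive $\psi$, for which the Feynman-Kac martingale gives $\Lambda_\beta(f)\le\tfrac2\beta\log(\|\psi\|_\infty/\inf\psi)$ with a constant that you control because \emph{you} picked $\psi$ — rather than trying to dominate $\Lambda_\beta$ by $\lambda$ uniformly over all $f$.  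This is the same Feynman-Kac trick you correctly deploy in the proof of Proposition~3.4 (lower bound), and it is the missing ingredient here.

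Two smaller remarks. For (i), you write ``pick $\phi\in C_c^\infty$'' and then invoke a Girsanov/entropy bound; that change of measure needs $\phi>0$ everywhere (otherwise $\nabla\phi/\phi$ and $\log\phi$ are ill-defined), so use a strictly positive smooth $\phi$ (a Gaussian works).  Even more simply, taking $f\equiv 0$ shows $\mathscr J_\beta\geq 0$, and Jensen gives $\mathscr J_\beta(\bar\mu_\beta)=0$ for $\bar\mu_\beta(\cdot)=\tfrac1\beta\int_0^\beta\P(B_s\in\cdot)\,\d s$, which disposes of (i) in one line.  For (iii), the existence-by-direct-method and the uniqueness-by-strict-convexity-in-$\mu$ arguments are sound (modulo the usual point that weak $H^1$ convergence of $\phi_n$ must be upgraded to local strong $L^2$ convergence via Rellich to pass to the limit in $\phi_n^2\,\d x$), but the strict positivity claim ``by the strong maximum principle applied to the Euler-Lagrange equation, a nonlinear Schr\"odinger-type equation'' is not justified: $\mathscr J_\beta(\phi^2)$ is a supremum over $f$ and is \emph{not} a local differential functional of $\phi$, so its Euler-Lagrange equation is not a Schr\"odinger-type PDE and the maximum principle does not apply off the shelf.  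Strict positivity has to come from the structure of $\mathscr J_\beta$ itself (irreducibility/positivity-improving property of the Feynman-Kac semigroup), which is again what the reference proves.
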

\begin{proof}
The proof is based on standard arguments and is omitted, see e.g. \cite[Lemma 1.2, Lemma 2.1, Lemma 2.2]{ABK06b}.
\end{proof}

\subsection{The many particle limit of the rescaled free energy at fixed temperature}

The first step of the proof of Theorem \ref{thm1} is 

\begin{Prop}\label{propABK06b}
Fix $\beta>0$ and suppose that Assumption \ref{assumption} holds. Then for $\alpha= \frac 1 {8\pi} \int_0^\infty v(r) \d r$, 
$$
\lim_{N\to\infty}\frac 1 {\beta N} \mathbf F(\beta,N)= - \chi^\otimes(\beta).
$$
\end{Prop}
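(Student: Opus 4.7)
My plan is to combine Cram\'er's theorem for the averaged empirical measure $\overline{L}_N := \frac{1}{N}\sum_{i=1}^N \mu_\beta^{\ssup i}$ with Varadhan's lemma, after reducing the singular pair interaction to a tractable functional of $\overline{L}_N$. Using $\frac{1}{\beta}\int_0^\beta\!\int_0^\beta f(B^{\ssup i}_s-B^{\ssup j}_t)\,ds\,dt = \beta\int\!\int f(x-y)\,\mu_\beta^{\ssup i}(dx)\mu_\beta^{\ssup j}(dy)$ and symmetrising over $i<j$, one obtains
$$\frac{1}{\beta N}\mathscr{K}_{N,\beta} \;=\; \tfrac{N}{2}\langle \overline{L}_N, V_N \overline{L}_N\rangle + \langle W, \overline{L}_N\rangle - \frac{1}{2N}\sum_{i=1}^N \langle \mu_\beta^{\ssup i}, V_N\mu_\beta^{\ssup i}\rangle,$$
where $NV_N$ is an approximate identity converging in distribution to $\alpha\,\delta_0$. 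I would show that the diagonal self-interaction is $o(1)$ in probability by a first-moment bound, truncating $v$ near the origin and controlling the tail through the Brownian Green's function bound~\eqref{v-Green}.

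The second step is to obtain an LDP for $\overline{L}_N$. Since $\mu_\beta^{\ssup 1},\dots,\mu_\beta^{\ssup N}$ are i.i.d.~$\mathcal{M}_1(\R^d)$-valued random elements, Cram\'er's theorem applies. The log-moment generating functional at $f\in C_b(\R^d)$ is $\log\E\exp\langle f,\mu_\beta\rangle = \log\E\exp\int_0^\beta (f/\beta)(B_s)\,ds$; after the substitution $g=f/\beta$, its Legendre transform at $\nu$ equals $\beta\,\mathscr{J}_\beta(\nu)$, cf.~\eqref{J}. Hence $\overline{L}_N$ obeys a weak LDP on $\mathcal M_1(\R^d)$ at speed $N$ with good rate function $\beta\,\mathscr{J}_\beta$, which by Lemma~\ref{lemma-phi} is finite only on densities of the form $\phi^2\,dx$ with $\phi\in H^1(\R^d)$ and $\|\phi\|_2=1$.

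Given these two steps, the upper bound $\limsup_N \frac{1}{\beta N}\mathbf F(\beta,N)\le -\chi^\otimes(\beta)$ should follow from Varadhan's lemma applied to $\Phi_N(\nu) := \tfrac{N}{2}\langle \nu, V_N\nu\rangle + \langle W,\nu\rangle$, provided the lack of weak continuity caused by the singular kernel is overcome. I would therefore introduce the truncation $v^{(M)} := (v\wedge M)\,\1_{\{|\cdot|\ge 1/M\}}$, verify that the truncated $\Phi_N^{(M)}$ is weakly continuous and converges, on smooth densities, to $\tfrac12\alpha_M\|\phi\|_4^4 + \langle W,\phi^2\rangle$ with $\alpha_M\to\alpha$, and then let $M\to\infty$ using \eqref{v-integrable} and \eqref{v-Green} to control the error. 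For the matching lower bound, I would tilt each Brownian path through the Doob/Gibbs transform associated with the unique smooth strictly positive minimizer $\phi_\star$ of $\chi^\otimes(\beta)$ furnished by Lemma~\ref{lemma-phi}: the Radon--Nikodym cost equals $\beta N\,\mathscr{J}_\beta(\phi_\star^2)+o(N)$, under the tilted law $\overline L_N\to\phi_\star^2\,dx$ by the LLN, and the smoothness of $\phi_\star$ allows $N\langle \phi_\star^2, V_N\phi_\star^2\rangle\to\alpha\|\phi_\star\|_4^4$ by standard approximate-identity arguments.

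The principal obstacle is that the quadratic form $\nu\mapsto N\langle\nu,V_N\nu\rangle$ is neither weakly continuous nor uniformly bounded in $N$, because the kernel $N^dv(N\cdot)$ concentrates singularly at the origin while $v(0)=\infty$. Handling this via the two-sided truncation of $v$, passing $N\to\infty$ first, and then removing the truncation using \eqref{v-integrable}--\eqref{v-Green} is the technical heart of the argument and essentially recycles the scheme already developed in~\cite{ABK06b}.
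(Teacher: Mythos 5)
Your plan follows essentially the same route the paper takes: rewrite the pair interaction as a quadratic form in the averaged occupation measure $\overline\mu_{N,\beta}$, invoke Cram\'er's theorem to obtain a (weak) LDP at speed $N\beta$ with rate $\mathscr J_\beta$, and apply Varadhan's lemma after a mollification/truncation step to tame the singular kernel and the unbounded trap. The paper's sketch arrives at the $\|\phi\|_4^4$ term by passing through the intersection local time $L_\beta^{\ssup{i,j}}(0)$ rather than by explicitly separating off the diagonal self-interaction as you do, but this is a presentational variant of the same heuristic, and both defer the substantive technical justification (superexponential approximation, exponential tightness, removing the truncation) to \cite{ABK06b}.
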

\begin{proof}
The proof of the above fact follows directly from  \cite[Theorem 1.1]{ABK06b}. For the convenience of the reader we provide a brief sketch of the proof. 

Recall \eqref{intersection}. Then we may rewrite    
\begin{equation}\label{heuristic} 
\begin{aligned} 
A_{N,\beta}:=&\frac 1 N \sum_{1\leq i< j \leq N} \frac 1 \beta \int_0^\beta\int_0^\beta N^d v\big(N|B^{\ssup i}_s- B^{\ssup j}_t|\big)\d s \d t\\
&=\beta N^{d-1}\sum_{1\leq i<j\leq N} \int_{\R^d}v(z N)L_\beta^{\ssup{i,j}}(z)\,\d z\\ 
&=N\beta\int_{\R^d}v(x)\frac 1{N^2}\sum_{1\leq i<j\leq N}L_\beta^{\ssup{i,j}}({\textstyle{\frac 1N}x})\,\d x. 
\end{aligned} 
\end{equation} 
Using the aforementioned continuity, we have $L_\beta^{\ssup{i,j}}(0)=\lim_{x\to 0}L_\beta^{\ssup{i,j}}(x)$ and formally, this quantity is also equal to the normalized total intersection local time of the two motions $B^{\ssup{i}}$ and $B^{\ssup{j}}$ up to time $\beta$, i.e., $L_\beta^{\ssup{i,j}}(0)= \int_{\R^d} \d x \frac{\mu_\beta^{\ssup{i}}(\d x)}{\d x}\frac{\mu_\beta^{\ssup{j}}(\d x)}{\d x}$. Thus, with $\alpha(v)=\frac 1{8\pi}\int_0^\infty v(r)\d r$, using \eqref{heuristic} we can formally write 
\begin{equation}\label{heuristic2}
\begin{aligned}
A_{N,\beta}
&\approx N\beta \frac 12\alpha(v)\,\frac 2{N^2}\sum_{1\leq i<j\leq N}L_\beta^{\ssup{i,j}}(0) \\
&\approx N\beta\frac 12 \alpha(v)\, \sum_{1\leq i < j \leq N} \int \d x \frac{\mu_\beta^{\ssup{i}}(\d x)}{\d x}\frac{\mu_\beta^{\ssup{j}}(\d x)}{\d x}\\
&\approx N\beta\frac 12 \alpha(v)\,\Bigl\langle \frac 1N \sum_{i=1}^N \mu_\beta^{\ssup{i}},\frac 1N \sum_{i=1}^N \mu_\beta^{\ssup{i}}\Bigr\rangle\\
&= N\beta\frac 12\alpha(v)\,\Bigl\|\frac{\d \overline\mu_{N,\beta}}{\d x}\Bigr\|_2^2,
\end{aligned}
\end{equation}
where in the last line we wrote $\overline \mu_{N,\beta}=\frac1 N\sum_{i=1}^N \mu^{\ssup i}_\beta$. 
By Cramer's theorem, $(\overline\mu_{N,\beta})_{N\in\N}$ satisfies a weak large deviation principle on $\mathcal M_1(\R^d)$ with speed $N\beta$ (As $N\to\infty$ for a fixed $\beta$) and rate function $\mathscr J_\beta$ which is the Fenchel-Legendre transformation of $f\mapsto \frac 1\beta\log\E\big[\e^{\int_0^\beta f(B_s)\d s}\big]$, i.e., 
$\P[\overline \mu_{N,\beta}\approx \phi^2(\cdot)] =\exp[-N\beta (\mathscr J(\phi^2)+o(1))]$. Moreover, exponential tightness of the sequence $(\overline\mu_{N,\beta})_{N\in\N}$ strengthens the last assertion to a strong large deviation principle. Thus, using the approximation \eqref{heuristic2} and subsequently using 
Varadhan's Lemma, we have 
$$
\begin{aligned}
\E\Bigl[{\rm e}^{-\mathscr K_{N,\beta}}\Bigr] &= \E\bigg[\e^{-A_{N,\beta}- N\beta\langle W,\overline \mu_{N,\beta}\rangle}\bigg] \\
&\approx \E\Bigl[\exp\Big\{-N\beta\Big[\bigl\langle W, \overline \mu_{N,\beta}\bigr\rangle-\frac 12\alpha(v)\,\Bigl\|\frac{\d \overline\mu_{N,\beta}}{\d x}\Bigr\|_2^2\Big]\Big\}\Big]\\
&={\rm e}^{-N\beta [\chi^{\ssup{\otimes}}(\beta)+ o(1)]}.
\end{aligned}
$$
Certainly the approximation \eqref{heuristic2} needs justification as the intersection local time is not a pointwise product of the ``empirical densities" ${\mu_\beta^{\ssup i}(\d x)}/{\d x}$ -- such densities simply do not exist since $\mu_\beta^{\ssup i}$ is not absolutely continuous w.r.t. the Lebesgue measure in $d>1$. Moreover, while applying Varadhan's lemma above, we assumed that continuity of the map $\mu\mapsto \|\frac{\mu(\d x)}{\d x}\|_2^2$ which is not true in general. Both steps can be justified by a well-known mollification procedure and an exponential approximation which allows one to remove the mollification in the large deviation analysis, see \cite{ABK06b}.   

\end{proof} 

\subsection{The variational formula $\chi^{\ssup{\otimes}}(\beta)$ in the zero-temperature limit $\beta\to\infty$}

Recall the variatioanal formula $\chi^{\ssup\otimes}(\beta)$ from \eqref{chibeta}. The goal of this section is to prove 

\begin{Theorem}\label{thm3}
Fix any $\alpha>0$. Then, 
$$
\lim_{\beta\to\infty} \chi^{\ssup\otimes}(\beta)= \inf_{\heap{\phi\in H^1(\R^d)}{\|\phi\|_2=1}} \bigg(\|\nabla\phi\|_2^2 + \langle W,\phi^2\rangle+ \frac {\alpha} 2 \|\phi\|_4^4\bigg)
=\chi^{\ssup{\mathrm{GP}}}
$$
\end{Theorem}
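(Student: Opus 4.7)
The approach is to recognise Theorem \ref{thm3} as a $\Gamma$-convergence statement: as $\beta\to\infty$, the entropy-like functional $\mathscr J_\beta$ converges in the $\Gamma$ sense to the Donsker-Varadhan rate function $\phi^2 \mapsto \|\nabla\phi\|_2^2$ of the Brownian motion with generator $\Delta$, while the remaining two summands $\langle W,\phi^2\rangle$ and $\frac{\alpha}{2}\|\phi\|_4^4$ are $\beta$-independent and lower semicontinuous in appropriate topologies. Splitting the proof into matching upper and lower bounds on $\chi^{\ssup\otimes}(\beta)$ then yields the convergence of infima to $\chi^{\ssup{\mathrm{GP}}}$.

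For the upper bound $\limsup_{\beta\to\infty} \chi^{\ssup\otimes}(\beta) \leq \chi^{\ssup{\mathrm{GP}}}$, I would take the smooth strictly positive GP-minimizer $\phi^{\ssup{\mathrm{GP}}}$ (or any smooth compactly supported positive $\phi$, after a density argument) as a trial function in $\chi^{\ssup\otimes}(\beta)$. The decisive step is then to show
$$
\limsup_{\beta\to\infty} \mathscr J_\beta(\phi^2) \leq \|\nabla\phi\|_2^2.
$$
This I would establish via an $h$-transform / Girsanov argument: conjugating the generator $\Delta$ by $h=\phi$ yields a drift diffusion with invariant measure $\phi^2\d x$, and the Radon-Nikodym derivative of its path law with respect to the Brownian path law up to time $\beta$ can be written down explicitly. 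A Legendre-duality argument then identifies $\mathscr J_\beta(\phi^2)$ (for the starting distribution $\phi^2\d x$) with the per-unit-time Kullback-Leibler divergence between these two path laws, which an integration by parts reduces to $\|\nabla\phi\|_2^2$ plus an $O(1)$ boundary term from the Radon-Nikodym derivative of the initial distributions; the boundary term vanishes upon division by $\beta$. Inserting into $\chi^{\ssup\otimes}(\beta)$ and taking the $\limsup$ gives the required bound.

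For the lower bound $\liminf_{\beta\to\infty} \chi^{\ssup\otimes}(\beta) \geq \chi^{\ssup{\mathrm{GP}}}$, I would argue by compactness. Let $(\phi_\beta)_{\beta>0}$ be the unique minimizers furnished by Lemma \ref{lemma-phi}. The upper bound just established gives a uniform-in-$\beta$ control of each of the three summands along $\phi_\beta$; combined with the coercivity of the trap $W$ from Assumption \ref{assumption}, this yields strong $L^2$-compactness of the family $(\phi_\beta)_\beta$. A coercivity estimate $\|\nabla\phi_\beta\|_2^2 \leq \mathscr J_\beta(\phi_\beta^2) + o(1)$, obtained from the dual formulation of $\mathscr J_\beta$ by testing against a well-chosen $f\in C_b$ and exploiting the pointwise convergence $\frac{1}{\beta}\log\E[\e^{\int_0^\beta f(B_s)\d s}] \to \lambda(f)$ to the principal eigenvalue of $\Delta+f$, then produces weak $H^1$-convergence of a subsequence $\phi_{\beta_k}\weakly\phi_\infty$ with $\|\phi_\infty\|_2=1$. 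Standard lower semicontinuity of $\|\nabla\cdot\|_2$, $\langle W,\cdot^2\rangle$ and $\|\cdot\|_4^4$ in this topology then closes the argument.

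The main obstacle I expect is the \emph{uniform} lower bound $\mathscr J_\beta(\phi_\beta^2) \geq \|\nabla\phi_\beta\|_2^2 - o(1)$ along the moving sequence $(\phi_\beta)$. Unlike the matching upper bound, which is clean via Girsanov at fixed $\phi$, this inequality must be upgraded to a $\Gamma$-$\liminf$ statement along weakly convergent sequences and so cannot be read off from the pointwise convergence of $\mathscr J_\beta$ alone. This is essentially the technical heart of the proof and parallels the Donsker-Varadhan analysis in \cite{ABK06a,ABK06b} underpinning Proposition \ref{propABK06b}; following those references, it can be handled by mollification of $\phi_\beta$ together with a careful uniform control of the boundary corrections arising from the duality between $\mathscr J_\beta$ and the Feynman-Kac semigroup.
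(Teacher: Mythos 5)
Your upper bound coincides exactly with the paper's: the $h$-transform of $\frac 12\Delta$ by $\phi$ gives the diffusion $\P^{\ssup\phi}$ with generator $L^{\ssup\phi}=\frac 12\Delta+\frac{\nabla\phi}{\phi}\cdot\nabla$, Girsanov provides the explicit Radon--Nikodym derivative, the ergodic theorem identifies $\frac 1\beta H(\P^{\ssup\phi}\mid\P)\big|_{\mathcal F_\beta}\to\frac 12\|\nabla\phi\|_2^2$, and a change-of-measure with Jensen lower-bounds the cumulant and hence upper-bounds $\mathscr J_\beta(\phi^2)$. You have reconstructed that step faithfully.

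Where you diverge is the lower bound, and here the paper's route is both more elementary and more robust than what you sketch. The paper does \emph{not} argue via compactness of a moving family of minimizers: it proves a pointwise $\liminf$ on $\mathscr J_\beta$ directly from the dual (Legendre--Fenchel) form, by substituting the single explicit test function $f_c=\dfrac{-\frac 12\Delta\phi}{c+\phi}$. The crucial observation is that $\Psi(t,x)=c+\phi(x)$ is a stationary solution of the Feynman--Kac equation $\partial_t\Psi=\frac 12\Delta\Psi+f_c\Psi$, whence $\E_x\big[\e^{\int_0^\beta f_c(B_s)\d s}\big]\le\frac{c+\phi(x)}{c}$, so the cumulant term in $\mathscr J_\beta$ contributes at most $O(1/\beta)$. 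Since $\langle f_c,\phi^2\rangle\to\frac 12\|\nabla\phi\|_2^2$ as $c\downarrow 0$ (by integration by parts), passing $\beta\to\infty$ and then $c\to 0$ gives the lower bound with no spectral theory, no Donsker--Varadhan identity, and no weak compactness of a minimizing family. Your compactness route can in principle be made to work, but the step you yourself flag as the technical heart --- the uniform coercivity estimate $\|\nabla\phi_\beta\|_2^2\le\mathscr J_\beta(\phi_\beta^2)+o(1)$ along the \emph{moving} sequence of minimizers --- is left unproved in your sketch and is precisely what the paper's test-function argument circumvents. As written, then, your proposal has a genuine gap at this point, whereas the paper's construction closes it in a self-contained way. (One further caveat that applies to both your argument and the paper's: passing from the pointwise convergence of $\mathscr J_\beta$ to the convergence of the infima $\chi^{\ssup\otimes}(\beta)$ is not automatic and requires an equi-coercivity or $\Gamma$-type justification; the paper asserts this ``follows directly'' from Proposition~\ref{propJ} without spelling it out, so you are right that a $\Gamma$-convergence framing makes the logical structure cleaner.)
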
 

Given Proposition \ref{propABK06b}, the proof of Theorem \ref{thm3} follows directly from 

\begin{Prop}\label{propJ}
Recall the energy function $\mathscr J_\beta$ from \eqref{J}. Then for any $\mu\in \mathcal M_1(\R^d)$, 
$$
\lim_{\beta\to\infty} \mathscr J_\beta(\mu)=
\begin{cases}
\frac 12 \|\nabla \phi\|_2^2 \quad\mbox{if } \phi^2=\frac{\d\mu}{\d x} \,\,\mbox{ exists and } \phi\in H^1(\R^d),\\
\infty\hspace{15mm} \mbox{else}. 
\end{cases}
$$
\end{Prop}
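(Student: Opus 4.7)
By definition, $\mathscr J_\beta$ is the Legendre transform on $\mathcal M_1(\R^d)$ of the rescaled log-Laplace functional
\[
\Lambda_\beta(f):=\frac 1\beta\log\E\big[\e^{\int_0^\beta f(B_s)\,\d s}\big],\qquad f\in C_b(\R^d).
\]
The candidate limit $I(\mu)=\frac 12\|\nabla\phi\|_2^2$ (if $\mu=\phi^2\d x$ with $\phi\in H^1$, $\|\phi\|_2=1$, and $+\infty$ else) is the classical Donsker--Varadhan rate function for the Brownian occupation measure, and coincides with the Legendre transform of
\[
\Lambda_\infty(f):=\sup_{\heap{\phi\in H^1(\R^d)}{\|\phi\|_2=1}}\big[\langle f,\phi^2\rangle-\tfrac 12\|\nabla\phi\|_2^2\big],
\]
the principal eigenvalue of $\frac 12\Delta+f$. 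So Proposition \ref{propJ} reduces to the statement that the Legendre transforms of $\Lambda_\beta$ converge pointwise to the Legendre transform of $\Lambda_\infty$, which I would establish by matching one-sided bounds.

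\textbf{Lower bound.} For every fixed $f\in C_b(\R^d)$, the variational definition of $\mathscr J_\beta$ gives $\mathscr J_\beta(\mu)\geq\langle\mu,f\rangle-\Lambda_\beta(f)$. The inequality $\limsup_{\beta\to\infty}\Lambda_\beta(f)\leq\Lambda_\infty(f)$ follows from the Feynman--Kac representation and the spectral theorem applied to $\frac 12\Delta+f$. Taking liminf in $\beta$ and then supremum over $f$ yields
\[
\liminf_{\beta\to\infty}\mathscr J_\beta(\mu)\geq\sup_{f\in C_b(\R^d)}\big[\langle\mu,f\rangle-\Lambda_\infty(f)\big]=I(\mu),
\]
handling $I(\mu)<\infty$ and $I(\mu)=+\infty$ simultaneously.

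\textbf{Upper bound.} Only $I(\mu)<\infty$ is nontrivial, so $\mu=\phi^2\d x$ with $\phi\in H^1$, $\|\phi\|_2=1$; by an $H^1$-mollification one may take $\phi>0$. Let $\Q$ be the law on $C([0,\beta],\R^d)$ of the $h$-transformed diffusion with generator $\frac 12\Delta+\nabla\log\phi\cdot\nabla$ started from its invariant measure $\phi^2\d x$, and write $H_\beta$ for the relative entropy of $\Q$ with respect to $\P$ on that interval. Gibbs' variational inequality, together with the $\Q$-stationarity of $\mu$, yields for every $f\in C_b(\R^d)$
\[
\Lambda_\beta(f)\geq\frac 1\beta\E^\Q\Big[\int_0^\beta f(B_s)\,\d s\Big]-\frac{H_\beta}\beta=\langle\mu,f\rangle-\frac{H_\beta}\beta.
\]
A direct Girsanov computation using $|\nabla\log\phi|^2\phi^2=|\nabla\phi|^2$ gives $H_\beta=\frac\beta 2\|\nabla\phi\|_2^2+O(1)$, where the $O(1)$ absorbs the static entropy of the two initial distributions. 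Rearranging and taking $\sup_f$ yields $\mathscr J_\beta(\mu)\leq\frac 12\|\nabla\phi\|_2^2+O(1/\beta)$, whence $\limsup_{\beta\to\infty}\mathscr J_\beta(\mu)\leq I(\mu)$.

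\textbf{Main obstacle.} The delicate point is the role of the initial distribution of $\P$, which is suppressed in the notation: $H_\beta$ is finite only when the starting marginal of $\P$ dominates $\phi^2\d x$, and it carries a static contribution on top of the $\beta$-linear path entropy. Under the (implicit) assumption that $\P$ starts from a strictly positive continuous density, this contribution is $O(1)$ and washes out upon division by $\beta$; otherwise a short waiting period / Markov restart absorbs the initial mismatch. A parallel technicality arises in the lower bound: the operator $\frac 12\Delta+f$ need not have an $L^2$ ground state for generic $f\in C_b(\R^d)$, so the cleanest implementation is to first verify $\limsup_\beta\Lambda_\beta(f)\leq\Lambda_\infty(f)$ (which is robust under the spectral theorem) and then restrict the supremum to a dense subfamily of test functions with mild confinement, removing the confinement at the end via monotone convergence. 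These are bookkeeping steps standard in Donsker--Varadhan theory and consistent with the paper's claim that the argument is routine.
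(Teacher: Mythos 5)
Your overall strategy — read $\mathscr J_\beta$ as the Legendre transform of $\Lambda_\beta$ and match one-sided bounds — is the same as the paper's, and your upper bound is essentially identical to theirs: $h$-transform to the generator $\frac 12\Delta+\nabla\log\phi\cdot\nabla$, Girsanov to compute the relative entropy $\frac 1\beta H(\P^{\ssup\phi}\,|\,\P)\big|_{\mathcal F_\beta}\to\frac 12\|\nabla\phi\|_2^2$, and the Gibbs/Jensen inequality to conclude $\Lambda_\beta(f)\geq\langle f,\phi^2\rangle-\frac 1\beta H_\beta$.

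The lower bound is where you genuinely diverge. You invoke the abstract Donsker--Varadhan identity $\sup_{f\in C_b}\big[\langle\mu,f\rangle-\Lambda_\infty(f)\big]=I(\mu)$ together with the (spectral-theorem) bound $\limsup_\beta\Lambda_\beta(f)\leq\Lambda_\infty(f)$. This is correct as a statement, but the hard half of the identity — $\geq I(\mu)$ — is exactly a construction of nearly optimal test functions, which is what the paper does explicitly and self-containedly. The paper sets $f_c=-\frac 12\Delta\phi/(c+\phi)$, observes that $\Psi(t,x)=c+\phi(x)$ is a space--time solution of $\partial_t\Psi=\frac 12\Delta\Psi+f_c\Psi$, and deduces from Feynman--Kac that $\E_x\big[\e^{\int_0^\beta f_c(B_s)\,\d s}\big]\leq(c+\phi(x))/c$, so $\limsup_\beta\Lambda_\beta(f_c)\leq0$ for each fixed $c>0$; sending $c\downarrow0$ and using $\langle f_c,\phi^2\rangle\to\frac 12\|\nabla\phi\|_2^2$ then closes the lower bound. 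This avoids any appeal to the spectral theorem or to the closed-form Legendre transform of $\Lambda_\infty$, and it also sidesteps the ground-state issue you raise (no eigenfunction of $\frac 12\Delta+f_c$ is ever needed, only a supersolution). Your route is defensible if you import the D--V identification as a black box, but as written it outsources the crux; the paper's single explicit test function is both more elementary and the cleaner way to present the lower bound. Your remarks about the initial distribution and about needing to mollify $\phi$ so that $\Delta\phi$ makes sense apply equally to the paper's argument, which also glosses over these points.
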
 

\noindent{\bf Proof of Proposition \ref{propJ}.} Note that given Lemma \ref{lemma-phi}, it suffices to prove Proposition \ref{propJ} for $\mu\in \mathcal M_1(\R^d)$ 
such that $\phi^2=\frac{\d\mu}{\d x} $ exists and $\phi\in H^1(\R^d)$. 
In this case, we will split the proof into two parts. 

\begin{Lemma}[Upper bound]
Let $\mu\in \mathcal M_1(\R^d)$ such that $\phi^2=\frac{\d\mu}{\d x} $ with $\phi\in H^1(\R^d)$. Then, 
$$
\limsup_{\beta\to\infty} \mathscr J_\beta(\mu) \leq \frac 12 \|\nabla \phi\|_2^2.
$$
\end{Lemma}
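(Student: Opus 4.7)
The plan is to establish the upper bound through the classical Donsker-Varadhan route: construct a path-space probability measure $\mathbb{Q}$ whose expected occupation measure equals $\mu$ and whose relative entropy with respect to $\P$ is $\tfrac{\beta}{2}\|\nabla\phi\|_2^2+O(1)$. The Gibbs variational inequality will then upgrade this into the desired bound $\mathscr J_\beta(\mu)\le\tfrac1\beta H(\mathbb{Q}|\P)$, and the claim follows on dividing by $\beta$ and taking $\limsup_{\beta\to\infty}$.

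To make this precise, I would first record the dualization. For any $\mathbb{Q}\ll\P$ on $\mathcal F_\beta$ with $H(\mathbb{Q}|\P)<\infty$ and any $f\in C_b(\R^d)$, the Gibbs inequality $\log\E^\P[\e^X]\ge\E^\mathbb{Q}[X]-H(\mathbb{Q}|\P)$ applied with $X=\int_0^\beta f(B_s)\,\d s$ yields
\[
\langle\mu,f\rangle-\frac{1}{\beta}\log\E^\P\bigl[\e^{\int_0^\beta f(B_s)\,\d s}\bigr]\le\langle\mu-\nu^\beta_\mathbb{Q},f\rangle+\frac{1}{\beta}H(\mathbb{Q}|\P),
\]
where $\nu^\beta_\mathbb{Q}:=\tfrac1\beta\E^\mathbb{Q}\!\int_0^\beta\delta_{B_s}\,\d s$ is the expected occupation measure. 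Since $f$ ranges over $C_b(\R^d)$, taking the supremum over $f$ yields a useful bound only when $\nu^\beta_\mathbb{Q}=\mu$, in which case one obtains $\mathscr J_\beta(\mu)\le\tfrac1\beta H(\mathbb{Q}|\P)$.

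Next, assume for the moment that $\phi$ is smooth and strictly positive (the general $H^1$ case will be handled by a mollification $\phi_\eps:=c_\eps(\phi^2\ast\rho_\eps+\eps\eta)^{1/2}$ with $\phi_\eps\to\phi$ in $H^1$). Take $\mathbb{Q}$ to be the law of the reversible diffusion generated by $\tfrac12\Delta+\nabla\log\phi\cdot\nabla$ started from its invariant measure $\mu=\phi^2\,\d x$; a direct Fokker-Planck check verifies invariance, so $B_s\sim\mu$ for every $s\in[0,\beta]$ and consequently $\nu^\beta_\mathbb{Q}=\mu$ holds exactly. Girsanov's theorem then gives
\[
H(\mathbb{Q}|\P)=H(\mu\mid\nu_0)+\tfrac12\E^\mathbb{Q}\!\left[\int_0^\beta|\nabla\log\phi(B_s)|^2\,\d s\right]=O(1)+\frac{\beta}{2}\|\nabla\phi\|_2^2,
\]
where the last equality uses stationarity together with the pointwise identity $|\nabla\log\phi|^2\phi^2=|\nabla\phi|^2$. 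Dividing by $\beta$ and passing to the $\limsup$ proves the claim under the smoothness hypothesis.

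The main technical hurdle is the final regularization step. Since $\mathscr J_\beta$ is a Legendre transform and hence merely lower-semicontinuous in $\mu$, the bound at $\mu_\eps:=\phi_\eps^2\,\d x$ does not automatically transfer to $\mu$. The standard remedy is either to realize the $\phi$-diffusion directly via Dirichlet form theory (using the symmetric form $\mathcal E(u,u)=\tfrac12\int|\nabla u|^2\phi^2\,\d x$ to bypass the classical Girsanov density altogether) or, alternatively, to tune the initial distribution $\zeta_\eps$ of the $\phi_\eps$-diffusion by a fixed-point argument so that $\nu^\beta_\mathbb{Q}=\mu$ holds exactly for every $\eps$, and then let $\eps\to0$ while using the $H^1$-convergence $\|\nabla\phi_\eps\|_2\to\|\nabla\phi\|_2$. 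Ancillary points such as verifying Novikov's condition, ensuring $H(\mu\mid\nu_0)=O(1)$ uniformly, and handling the zero set of $\phi$ are standard but require care.
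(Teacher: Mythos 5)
Your strategy is fundamentally the same as the paper's: tilt to the $\phi$-diffusion with generator $\tfrac12\Delta+\nabla\log\phi\cdot\nabla$, compute the Radon--Nikodym derivative via Girsanov, and close via the Gibbs/entropy inequality $\log\E^\P[\e^X]\ge\E^\mathbb{Q}[X]-H(\mathbb{Q}|\P)$. The one place where you deviate from the paper is the initial law, and that is where a genuine gap appears.

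You start $\mathbb{Q}$ from the invariant measure $\mu=\phi^2\,\d x$ in order to make $\nu^\beta_\mathbb{Q}=\mu$ hold exactly, and you then assert
\[
H(\mathbb{Q}|\P)\;=\;H(\mu\mid\nu_0)+\tfrac{\beta}{2}\|\nabla\phi\|_2^2\;=\;O(1)+\tfrac{\beta}{2}\|\nabla\phi\|_2^2 .
\]
But in the paper's setting the reference measure is $\P_0$, Brownian motion started at the point $0$, so $\nu_0=\delta_0$ and $H(\mu\mid\delta_0)=\infty$ whenever $\mu$ has a density. In that case $\mathbb{Q}$ is \emph{singular} with respect to $\P$ on $\mathcal F_\beta$ (they disagree already on the event $\{B_0=0\}$), the path-space entropy is identically $+\infty$, and the Gibbs bound is vacuous. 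The assertion $H(\mu\mid\nu_0)=O(1)$ smuggles in an unstated hypothesis on $\nu_0$ that is not available here. The paper avoids the problem precisely by starting both $\P^{\ssup\phi}$ and $\P$ at $0$, so that Girsanov applies on $\mathcal F_\beta$; the price is that the stationarity identities you invoke are no longer exact, and one must use the ergodic theorem to obtain the two limits
\[
\tfrac1\beta\E^{\P^{\ssup\phi}}\!\Bigl[\int_0^\beta f(B_s)\,\d s\Bigr]\longrightarrow\langle f,\phi^2\rangle,
\qquad
\tfrac1\beta H(\P^{\ssup\phi}|\P)\big|_{\mathcal F_\beta}\longrightarrow\tfrac12\|\nabla\phi\|_2^2.
\]
Since the lemma asks only for a $\limsup_{\beta\to\infty}$ bound, this asymptotic version suffices; you should replace your exact-stationarity step by this ergodic-theorem step to close the gap. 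Your discussion of how to regularize a merely $H^1$ function $\phi$ (mollification, Dirichlet-form realization of the tilted diffusion, lower semicontinuity of the Legendre transform) raises a real issue that the paper glosses over, but note that the paper's own argument leaves this at the same informal level.
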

\begin{proof}
Let $\P^{\ssup\phi}$ be the diffusion starting at $0$ corresponding to the generator 
$$
L^{\ssup \phi}=\frac 12 \Delta+ \frac{\nabla\phi}{\phi}\cdot\nabla.
$$ 
It follows readily that $\P^{\ssup\phi}$ is ergodic with invariant measure $\mu\in \mathcal M_1(\R^d)$ with density $\phi^2(\cdot)$. 

Let $\mathcal F_\beta$ be the $\sigma$-algebra generated by a Brownian path $(B_s)_{s\in [,\beta]}$ in the time interval $[0,\beta]$. Then  by Girsanov's theorem, 
\begin{equation}\label{measchange}
\begin{aligned}
\frac{\d \P^{\ssup\phi}}{\d \P_0}\bigg|_{\mathcal F_{\beta}}&= \exp\bigg[-\int_0^{\beta} \frac {\nabla \phi(B_s)}{\phi(B_s)}\d B_s + \, \frac 12\int_0^{\beta} \bigg|\frac{\nabla \phi(B_s)}{\phi(B_s)}\bigg|^2\, \d s\bigg]
\\
&= \exp\bigg[- \log \phi(0)+ \log \phi(B_{\beta}) +\frac 12\int_0^{\beta} \bigg|\frac{\nabla \phi(B_s)}{\phi(B_s)}\bigg|^2\, \d s\bigg]\\
&= \frac{\phi(B_\beta)}{\phi(0)} \exp\bigg[ \frac 12\int_0^{\beta} \bigg|\frac{\nabla \phi(B_s)}{\phi(B_s)}\bigg|^2\, \d s\bigg].
\end{aligned}
\end{equation}
which provides a formula for the relative entropy on the time interval $[0,\beta]$:
\begin{equation}\label{erg1}
\begin{aligned}
\frac 1 \beta \, H(\P^{\ssup\phi}| \P)\big|_{\mathcal F_{\beta}}&= \frac 1 \beta \, \E^{\P^{\ssup\phi}} \bigg[\log\bigg(\frac{\d \P^{\ssup\phi}}{\d \P_0}\bigg|_{\mathcal F_{\beta}}\bigg)\bigg]\\
&= \frac1 \beta \E^{\P^{\ssup\phi}}[\log (\phi(B_\beta))] - \frac 1 \beta \log \phi(0)+ \frac 1 \beta \E^{\P^{\ssup\phi}} \bigg[ \frac 12\int_0^{\beta} \bigg|\frac{\nabla \phi(B_s)}{\phi(B_s)}\bigg|^2\, \d s\bigg] 
\end{aligned}
\end{equation}
Note that by the ergodic theorem applied to the measure $\P^{\ssup\phi}$, the third term in the last display converges to the spatial average $\frac 12 \int \d x |\nabla \phi(x)|^2$ as $\beta\to\infty$, while the first two terms disappear in the same limit. Hence, 
\begin{equation}\label{erg1.5}
\lim_{\beta\to\infty}\bigg(\frac 1 \beta \, H(\P^{\ssup\phi}| \P)\big|_{\mathcal F_{\beta}}\bigg)= \frac 12 \|\nabla\phi||_2^2.
\end{equation} 
 Now, by a change of measure argument, followed by Jensen's inequality, 
$$
\begin{aligned}
\frac 1 \beta \log \E\bigg[\e^{\int_0^\beta f(B_s)\d s}\bigg]&= \frac 1 \beta \log {\mathbb E}^{\mathbb P^{\ssup\phi}} \bigg[\e^{\int_0^\beta f(B_s)\d s} \, \exp\bigg\{-\log \bigg(\frac{\d \P^{\ssup\phi}}{\d \P_0}\bigg|_{\mathcal F_{\beta}}\bigg)\bigg\}\bigg] \\
&\geq \frac 1 \beta \mathbb E^{\mathbb P^{\ssup\phi}} \bigg[\int_0^\beta f(B_s)\d s\bigg]- \frac 1\beta \, H\big(\mathbb P^{\ssup\phi}| \mathbb P\big)\big|_{\mathcal F_\beta} \\
\end{aligned}
$$
Again by the ergodic theorem for $\P^{\ssup\phi}$ implies that the first term above converges to $\langle f, \phi^2\rangle$, and combined with \eqref{erg1.5}, the latter assertion implies 
$$
\liminf_{\beta\to\infty} \frac 1 \beta \log \E\bigg[\e^{\int_0^\beta f(B_s)\d s}\bigg] \geq \langle f, \phi^2\rangle - \frac 12 \|\nabla\phi\|_2^2,
$$
which proves the lemma.
\end{proof}

\medskip

\begin{Lemma}[Lower bound]
Let $\mu\in \mathcal M_1(\R^d)$ such that $\phi^2=\frac{\d\mu}{\d x} $ with $\phi\in H^1(\R^d)$. Then, 
$$
\liminf_{\beta\to\infty} \mathscr J_\beta(\mu) \geq \frac 12 \|\nabla \phi\|_2^2.
$$
\end{Lemma}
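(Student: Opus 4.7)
The plan is to exhibit, for every $\eps > 0$, an explicit test function $f_\eps \in C_b(\R^d)$ that is a near-optimizer in the variational definition \eqref{J} of $\mathscr J_\beta$, with two properties: (i) $\langle \mu, f_\eps\rangle \geq \frac 12 \|\nabla \phi\|_2^2 - \eps$, and (ii) $\limsup_{\beta \to \infty}\frac 1 \beta \log \E[\e^{\int_0^\beta f_\eps(B_s)\,\d s}] \leq 0$. Once this is done, the pointwise lower bound $\mathscr J_\beta(\mu) \geq \langle \mu, f_\eps\rangle - \frac 1 \beta \log \E[\e^{\int_0^\beta f_\eps(B_s)\,\d s}]$ gives $\liminf_{\beta\to\infty} \mathscr J_\beta(\mu) \geq \frac 12 \|\nabla\phi\|_2^2 - \eps$, and sending $\eps \to 0$ concludes.

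The natural ansatz comes from $h$-transform theory. To any strictly positive $g \in C^2(\R^d)$ with $0 < c \leq g \leq C$ and $\Delta g / g \in L^\infty(\R^d)$, associate the bounded continuous function $f_g := -\Delta g / (2g)$. Since $(\tfrac 12 \Delta + f_g) g \equiv 0$, It\^o's formula shows that $M_t := g(B_t) \exp(\int_0^t f_g(B_s)\,\d s)$ is a local martingale, upgraded by boundedness of $g$ and $f_g$ to a true martingale on $[0,\beta]$; hence $\E[M_\beta] = g(B_0)$, which yields
\begin{equation*}
\E\bigl[\e^{\int_0^\beta f_g(B_s)\,\d s}\bigr] \leq g(B_0)/c \leq C/c,
\end{equation*}
and property (ii) follows. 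An integration by parts (legitimate because $\phi \in H^1(\R^d)$ and $g, 1/g, \nabla g$ are all bounded) then rewrites
\begin{equation*}
\langle \mu, f_g\rangle = \frac 12 \|\nabla \phi\|_2^2 - \frac 12 \int_{\R^d} \phi^2 \bigg|\frac{\nabla g}{g} - \frac{\nabla \phi}{\phi}\bigg|^2 \d x,
\end{equation*}
so property (i) reduces to constructing admissible $g$ whose logarithmic derivative $\nabla \log g$ approximates $\nabla \log \phi$ in $L^2(\phi^2 \d x)$.

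To build such approximants one takes $g = \phi \ast \rho_\delta + \eta$, where $\rho_\delta$ is a smooth mollifier and $\eta > 0$ a small constant. Then $g \geq \eta > 0$, $g$ is bounded (using $\phi \in L^2$ together with $\rho_\delta \in L^2$), and $\Delta g / g$ is bounded because $\Delta(\phi \ast \rho_\delta) = \phi \ast \Delta \rho_\delta \in L^\infty$. Sending $\delta \to 0$ and $\eta \to 0$ at suitably matched rates, pointwise convergence $\nabla \log g \to \nabla \log \phi$ on $\{\phi > 0\}$, combined with the fact that $\nabla \phi = 0$ a.e.\ on $\{\phi = 0\}$ (a standard property of $H^1$ functions), yields by dominated convergence the required $L^2(\phi^2 \d x)$ approximation. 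This amounts to the classical Donsker--Varadhan variational identity $\sup_g \langle \mu, -\Delta g/(2g)\rangle = \frac 12 \|\nabla \phi\|_2^2$.

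The main obstacle is precisely this last approximation step. Because $\nabla \log \phi = \nabla \phi / \phi$ can be singular on the zero set of $\phi$, controlling the joint limit $\delta, \eta \to 0$ and producing a uniform domination estimate requires care; this is exactly where the $H^1$-regularity of $\phi$ (and the a.e.\ vanishing of $\nabla \phi$ on $\{\phi = 0\}$) is essential. Once this technical point is settled, the Feynman--Kac martingale argument providing step (ii) makes the remainder of the proof routine.
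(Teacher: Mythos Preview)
Your approach is essentially the same as the paper's: both choose test functions of the form $f_g=-\Delta g/(2g)$ with $g$ a strictly positive perturbation of $\phi$, use the Feynman--Kac formula to bound $\E[\e^{\int_0^\beta f_g(B_s)\d s}]$ by $g(B_0)/\inf g$ (hence sub-exponential in $\beta$), and then let the perturbation vanish to recover $\frac12\|\nabla\phi\|_2^2$. The paper simply takes $g=c+\phi$ and sends $c\downarrow 0$, whereas you add a mollification $g=\phi\ast\rho_\delta+\eta$; this extra step is exactly what is needed to guarantee $f_g\in C_b(\R^d)$ for general $\phi\in H^1(\R^d)$, so your version is in fact the more careful of the two.
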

\begin{proof}

It suffices to show that for some $f\in C_b(\R^d)$, 
\begin{equation}\label{lb}
\liminf_{\beta\to\infty} \bigg[\langle f,\phi^2\rangle - \frac 1 \beta \log \E_0\big[\e^{\int_0^\beta f(B_s)\d s} \big]\bigg] \geq \frac 1 2\|\nabla\phi\|_2^2.
\end{equation}
Let us fix $c>0$. We will first show that for $f=f_c= \frac {-\frac 12 \Delta\phi}{c+\phi}$, 
the expectation 
\begin{equation}\label{subexp}
\limsup_{\beta\to \infty} \frac 1 \beta \log \E_0\big[\e^{\int_0^\beta f(B_s)\d s} \big] \leq 0
\end{equation}
has a sub-exponential growth. 
To see the above claim, start with the parabolic equation 
$$
\begin{aligned}
&\partial_t\Psi=\frac 1 2\Delta \Psi+ f \Psi \\
&\Psi(0,x)= c+\phi(x).
\end{aligned}
$$
Since $f=f_c= \frac {-\frac 12 \Delta\phi}{c+\phi}$, 
 the function $\Psi(t,x)=c+\phi(x)$ for all $t>0$ obviously solves the above equation. 
Therefore, by the Feynman-Kac formula, 
$$
c+\phi(x)=\Psi(\beta,x)= \E_x\big[\e^{\int_0^\beta f(B_s)\d s }\, (c+\phi)(B_\beta)\big] \geq c \E_x\big[\e^{\int_0^\beta f(B_s)\d s }\big]
$$
Therefore, 
$$\E_x\big[\e^{\int_0^\beta f(B_s)\d s }\big] \leq \frac{c+\phi(x)}c 
$$
which proves \eqref{subexp}. Passing to the limit $c\to 0$ provides the desired lower bound. 

\end{proof}

\section{Proof of Theorem \ref{thm2}} 

The proof of Theorem \ref{thm2} is based on a large deviation principle for the distribution of the total intersection local time $L^{\ssup{i,j}}_\beta(0)$ of the $i^{\mathrm{th}}$ and the $j^{\mathrm{th}}$ Brownian path until time $\beta$. This object is closely related to the so-called {\it Brownian intersection measure}
\begin{equation}\label{int-meas}
\ell_\beta(A)= \ell_\beta^{\ssup{1,2}}(A)=\int_A \d y \prod_{i=1}^2 \int_0^\beta \d s \delta_y(B^{\ssup i}_s), \qquad A \subset \R^d,
\end{equation}
supported on the  set $S=\cap_{i=1}^2 B^{\ssup i}([0,\beta])$ which is non-empty in $d\in \{2,3\}$. Note that $L^{\ssup{i,j}}_\beta(0)= \sum_{i<j}\beta^{-2}\int_{\R^d} 1 \ell_{\beta}^{\ssup{i,j}}(\d x)$ is obtained from the {\it total mass} of the locally finite measure $\beta^{-2} \ell_\beta^{\ssup{i,j}}(\cdot) \in \mathcal M(\R^d)$. If we now recall the definition of $\mathscr L_{N,\beta}$ from \eqref{LN}, since 
$$
\sum_{i=1}^N \int_0^\beta \d s W(B^{\ssup i}_s)= \sum_{i=1}^n \beta \langle W, \mu_\beta^{\ssup i}\rangle,
$$
 the task of proving Theorem \ref{thm2} reduces to proving a large deviation principle for the distribution of the tuple

$$
\big(\beta^{-2} \ell_\beta, \mu^{\ssup 1}_\beta,\dots, \mu^{\ssup N}_\beta\big)\in \Mcal(\R^d)\otimes \Mcal_1(\R^d)^N
$$ 
as $\beta\to\infty$, combined with an appropriate application of the contraction principle and Varadhan's lemma (for the relevant functional $\Mcal_1(\R^d)\ni \mu \mapsto \langle \mu, W\rangle$). A variant of this task was carried out in \cite[Theorem 1.12]{ABK06a} when the Brownian motions are replaced by simple random walk on the lattice $\Z^d$, where the intersection measure is simply defined to be the pointwise product of the local times (the number of visits of the random walk) at any given lattice site. Since the Brownian 
occupation measures $\mu_\beta^{\ssup i}$ are singular in $d\geq 2$, the present situation is quite different from the one corresponding to random walks on lattices. 

A precise definition of $\ell_\beta$ is provided by a suitable approximation scheme. Indeed let $\varphi_\eps$ be a smooth mollifier, i.e., if $\varphi=\varphi$ is a non-negative, $\mathcal C^\infty$-function on $\R^d$ having compact support with
with $\int_{\mathbb R^d}\varphi(y)\,\d y=1$, we take
$$
\varphi_\eps(x)=\eps^{-d}\varphi(x/\eps).
$$
Then $\int_{\R^d} \varphi_\eps=1$ and $\varphi_\eps\Rightarrow \delta_0$ weakly as probability measures. Then  the mollified occupation densities 
are defined as
\begin{equation*}
\mu^{\ssup i}_{\eps,\beta}(y)=\varphi_\eps\star \mu^{\ssup i}_\beta(y)=\frac 1\beta \int_0^\beta\d s\hspace{1mm}\varphi_{\eps}(B^{\ssup i}_s-y) \qquad i=1,2.
\end{equation*}
For each fixed $\eps>0$, these are smooth bounded functions in $\R^d$, and we can take the pointwise product 
$$
\ell_{\eps,\beta}(y)= \prod_{i=1}^2 \mu_{\eps,\beta}^{\ssup i}(y)
$$
and define $\ell_{\eps,\beta}\in \Mcal(\R^d)$ to be the measure with density $\ell_{\eps,\beta}(y)$. Note that without any mollification, the pointwise product of the occupation measures $\prod_{i=1}^2 \mu_\beta^{\ssup i}$ themselves is a discontinuous operation. 
 The smoothing procedure w.r.t. $\varphi_\eps$ alleviates the situation and makes the pointwise product $\ell_{\eps,\beta}$ a continuous functional of the smoothed occupation densities 
 $\mu_{\beta,\eps}^{\ssup i}$. Then the following superexponential estimate is crucial in the present context: 

\begin{Lemma}\label{lemma:superexp}
For any $a>0$ and any continuous and bounded function $f:\R^\to \R$, 
\begin{equation}\label{eq1:superexp}
\begin{aligned}
\lim_{\eps\downarrow 0} \limsup_{\beta\to\infty} \frac 1 \beta \log \P\bigg[ \bigg\{ \beta^{-2} \big| &\big\langle f, \ell_\beta- \ell_{\eps,\beta}\big\rangle \big|  >a\bigg\} 
\,\,\cup \,\bigcup_{i=1}^2 
\bigg\{ \beta^{-2} \big| \big\langle \mu_\beta^{\ssup i}- \mu^{\ssup i}_{\eps,\beta}\big\rangle \big| >a \bigg\}\bigg]\\
&=-\infty.
\end{aligned}
\end{equation}
\end{Lemma}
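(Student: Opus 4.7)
The plan is to treat the two parts of the union separately by a union bound, and for each to apply an exponential-Chebyshev estimate reducing to a moment bound.

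\emph{Occupation-measure part.} For each $i \in \{1,2\}$ one rewrites $\langle f, \mu_{\eps,\beta}^{\ssup i} - \mu_\beta^{\ssup i}\rangle = \langle \varphi_\eps * f - f, \mu_\beta^{\ssup i}\rangle$ by Fubini. Since $f \in C_b(\R^d)$ is uniformly continuous on compacts, $\|\varphi_\eps * f - f\|_{L^\infty(B_R)} \to 0$ as $\eps \downarrow 0$ for every $R > 0$. Splitting along the event $\{\sup_{s \le \beta}|B^{\ssup i}_s| \le R_\beta\}$ and its complement, and choosing $R_\beta \to \infty$ slowly (e.g.\ $R_\beta = \beta^{3/4}$), the contribution on the event tends to $0$ uniformly in $\beta$ as $\eps\downarrow 0$, while the probability of the complement is superexponentially small in $\beta$ by standard Gaussian tail bounds. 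This handles the $\mu$-part.

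\emph{Intersection-measure part.} This is the principal obstacle. By exponential Chebyshev it suffices to show that, for every $\lambda > 0$,
$$\limsup_{\beta \to \infty} \frac{1}{\beta}\log \E\bigl[\exp\bigl(\lambda \beta^{-2}|\langle f, \ell_\beta - \ell_{\eps,\beta}\rangle|\bigr)\bigr] \xrightarrow[\eps\downarrow 0]{} 0.$$
Using Fubini, $\langle f, \ell_{\eps,\beta}\rangle = \int_0^\beta\!\int_0^\beta (f*\psi_\eps)(B^{\ssup 1}_s - B^{\ssup 2}_t)\,\d s\,\d t$ with $\psi_\eps = \varphi_\eps*\check\varphi_\eps$, whereas $\langle f, \ell_\beta\rangle$ is the analogous double integral with $f$ in place of $f*\psi_\eps$ (interpreted via the approximation scheme defining $\ell_\beta$). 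Expand the exponential into its power series and bound each moment $\E[|\beta^{-2}\langle f - f*\psi_\eps, \ell_\beta\rangle|^n]$ by appealing to the Markov property of the two independent Brownian motions and rewriting the expectation as an iterated integral of products of translates of the Green's function $G$. The integrability condition \eqref{v-Green} tames the on-diagonal singularity in the critical dimensions $d \in \{2,3\}$, giving bounds of the form $n!\, C^n\, \omega(\eps)^n$ with $\omega(\eps) \to 0$; summing over $n$ produces the desired exponential estimate.

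\emph{Main obstacle.} The heart of the argument is the uniform-in-$\beta$ moment bound for the intersection-measure difference. The critical dimensions $d \in \{2,3\}$ make this delicate: the chaos-type expansion has factorially many terms, each an iterated integral involving $G$ near $0$, and the cancellations produced by the fact that $f - f*\psi_\eps$ integrates to $0$ (in the appropriate dual sense) must be combined with the singular integrability bound \eqref{v-Green} to extract the required smallness in $\eps$, uniformly in the time horizon $\beta$.
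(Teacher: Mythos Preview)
Your treatment of the occupation-measure part is fine (and in fact, since each $\mu_\beta^{\ssup i}$ and $\mu_{\eps,\beta}^{\ssup i}$ is a probability measure, the event with the $\beta^{-2}$ prefactor is eventually empty).

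The intersection-measure part, however, has several genuine gaps.

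\textbf{(i) The exponential Chebyshev reduction is mis-stated.} From $\P[X_\beta>a]\le \e^{-\lambda a}\E[\e^{\lambda X_\beta}]$ you only get $\frac1\beta\log\P[X_\beta>a]\le -\frac{\lambda a}{\beta}+\frac1\beta\log\E[\e^{\lambda X_\beta}]$, whose first term vanishes. To reach $-\infty$ you need the exponent $\lambda\beta$, i.e.\ you must control $\frac1\beta\log\E[\exp(\lambda\beta^{-1}|\langle f,\ell_\beta-\ell_{\eps,\beta}\rangle|)]$ for \emph{every} $\lambda>0$. This is a much stronger requirement.

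\textbf{(ii) The rewriting of $\langle f,\ell_{\eps,\beta}\rangle$ is incorrect.} Since $\ell_{\eps,\beta}(y)=\prod_{i=1}^2(\varphi_\eps\!\star\!\mu_\beta^{\ssup i})(y)$, one has $\langle f,\ell_{\eps,\beta}\rangle=\int\!\!\int g_\eps(z_1,z_2)\,\mu_\beta^{\ssup 1}(\d z_1)\mu_\beta^{\ssup 2}(\d z_2)$ with $g_\eps(z_1,z_2)=\int f(y)\varphi_\eps(y-z_1)\varphi_\eps(y-z_2)\,\d y$. This is \emph{not} a function of $z_1-z_2$ unless $f$ is constant, so the representation $(f\ast\psi_\eps)(B^{\ssup 1}_s-B^{\ssup 2}_t)$ fails. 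You are conflating the intersection measure $\ell_\beta$ with the local time $L_\beta^{\ssup{1,2}}$ of the difference process.

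\textbf{(iii) The appeal to \eqref{v-Green} is a misreference.} That condition concerns the pair-interaction potential $v$ in Assumption~\ref{assumption} and plays no role in Lemma~\ref{lemma:superexp}; the mollifier $\varphi_\eps$ and the Green's function singularity of the intersection local time are unrelated to it.

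\textbf{(iv) The claimed moment growth $n!\,C^n\omega(\eps)^n$ is too optimistic.} The $n$-th moment of $\langle f,\ell_\tau\rangle$ involves \emph{two} independent time-orderings (one for each Brownian motion), producing a factor $(n!)^2$ rather than $n!$; the paper's bound is precisely of the form $C^k(k!)^2$ times a quantity small in $\eps$. With $(n!)^2$ growth the exponential series $\sum_n \frac{\lambda^n}{n!}\E[X^n]$ diverges and the exponential-moment route collapses.

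The paper avoids all of this by a different mechanism: polynomial Chebyshev with $k=\lceil\beta\rceil$, a Brownian-scaling/exponential-time identity that trades the $\beta$-dependence for a $\Gamma(1+k/2)$, and a Fourier (Parseval) computation in which the smallness comes from $|1-\widehat\varphi_\eps(\lambda)|\to 0$ combined with the dimensional integrability $\int_{\R^d}(1+|\lambda|^2)^{-2}\,\d\lambda<\infty$ for $d\le 3$. The $(k!)^2$ exactly cancels against $\beta^{-2k}\beta^{k/2}/\Gamma(1+k/2)$ at $k\sim\beta$ via Stirling, leaving only the $\eps$-small factor raised to the $k$-th power. None of these cancellations are visible in your sketch; to salvage your position-space/Green's-function idea you would at minimum need to recover the correct $(n!)^2$ moment growth and then abandon the exponential Chebyshev in favor of the $k\sim\beta$ polynomial one.
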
 
  The above estimate in particular shows that the deviations of $\ell_\beta$ from its smoothed counterpart $\ell_{\eps,\beta}$ is small, even on an {\it exponential scale}.

 We now turn to the proof of Lemma \ref{lemma:superexp}. When the Brownian motions are restricted to a compact subset of $\R^d$ (i.e., the free probability measure $\P$ is replaced by a sub-probability measure $\P\big(\cdot \cup \cap_{i=1}^2 \{\tau_i>\beta\}\big)$ where $\tau_i$ is the first exit time of the $i^{\mathrm{th}}$ Borownian motion 
 from a bounded set $\R^d$), the estimate \eqref{eq1:superexp} was proved in \cite[Proposition 2.3]{KM11}. The proof there is based on an application of the spectral theorem of the Laplacian (with Dirichlet boundary condition) that provides a Fourier expansion of the transition sub-probability densities of the Brownian paths in terms of eigenvalues and eigenfunctions of the Laplacian. In the present context of treating free Brownian motions  in the full space $\R^d$ such an eigenvalue expansion is not available and we employ a different method. 
 
 \medskip 
 
 \begin{proof}[{\bf Proof of Lemma \ref{lemma:superexp}:}] 
 It suffices to show that for any $a>0$ and countinuous and bounded test function $f:\R^d \to \R$, 
 
 \begin{equation}\label{eq:superexp}
\begin{aligned}
\lim_{\eps\downarrow 0} \limsup_{\beta\to\infty} \frac 1 \beta \log \P\bigg[ \bigg\{ \beta^{-2} \big| &\big\langle f, \ell_\beta- \ell_{\eps,\beta}\big\rangle \big|  >a\bigg\} \bigg]
=-\infty. 
\end{aligned}
\end{equation}

We first note that by Chebycheff's inequality, for any integer $k\in \N$, 
\begin{equation}\label{chebyshev}
\P\big[ \beta^{-2}\big|\big\langle f, \, \ell_\beta- \ell_{\eps,\beta}\big\rangle\big|>a\big] \leq a^{-k} \,\, \beta^{-2k}\,\, \E\bigg[\big|\big\langle f, \, \ell_\beta- \ell_{\eps,\beta}\big\rangle\big|^k\bigg],
\end{equation}

Thus it suffices to handle the moments appearing on the right hand side. Let $\tau$ be an exponential random variable with parameter $1$, which is independent of both $B^{\ssup 1}$ and $B^{\ssup 2}$. 
We note that by Brownian scaling property, $\E[\langle \ell_{\theta r}, f\rangle^k]=\,\, \theta^{k/2} \,\,\E[\langle \ell_{r}, f\rangle^k]$ for any $\theta, r>0$. 
If  $E^{\ssup\tau}$ denotes expectation with respect to $\tau$,  
$$
\begin{aligned}
\big[\E\otimes E^{\ssup\tau}\big]\bigg\{\langle f, \ell_\tau-\ell_{\eps,\tau}\rangle^k\bigg\}&= E^{\ssup\tau}\big[(\tau t^{-1})^{k/2}\big] \, \,\E^\otimes\bigg\{\big|\big\langle f, \, \ell_\beta- \ell_{\eps,\beta}\big\rangle\big|^k\bigg\}\\
&= \beta^{-k/2} \, \Gamma\bigg(1+\frac k 2\bigg) \,\,\E\bigg\{\big|\big\langle f, \, \ell_\beta- \ell_{\eps,\beta}\big\rangle\big|^k\bigg\}
\end{aligned}
$$
If we combine this estimate with \eqref{chebyshev}, we get,
\begin{equation}\label{eq1.5-superexp}
\P\big\{ \beta^{-2}\big|\big\langle f, \, \ell_\beta- \ell_{\eps,\beta}\big\rangle\big|>a\big\} \leq \bigg[a^{-k} \,\,\frac{\beta^{k/2}}{\Gamma\big(1+\frac k 2\big)}\bigg]\,\, \,\beta^{-2k}\,\,\, \big[\E\otimes E^{\ssup\tau}\big]\bigg[\big|\big\langle f, \, \ell_\tau- \ell_{\eps,\tau}\big\rangle\big|^k\bigg],
\end{equation}
If we can prove that
\begin{equation}\label{eq2-superexp}
\lim_{\eps\downarrow 0} \limsup_{k\uparrow\infty} \frac 1k \log \bigg[\frac 1 {k!^2} \,\,\big[\E\otimes E^{\ssup\tau}\big]\bigg[\big|\big\langle f, \, \ell_\tau- \ell_{\eps,\tau}\big\rangle\big|^k\bigg]\bigg]=-\infty.
\end{equation}
then in \eqref{eq1.5-superexp} we can choose $k=\lceil \beta\rceil$ and apply Stirlings's formula to see that the requisite claim \eqref{eq:superexp} follows from \eqref{eq2-superexp}. 

We remark that it suffices to prove \eqref{eq2-superexp} without the absolute value inside the expectation, 
since for $k\to\infty$ along even numbers, we can simply drop the absolute value in \eqref{eq2-superexp}, and when when $k$ is odd, we can use Jensen's inequality to go from  the power $k$ to $k+1$ and use that $((k+1)!^2)^{k/(k+1)}\leq k!^2 C^k$ for some $C\in(0,\infty)$ and all $k\in\N$.

For any $\lambda\in \R^d$, let $\widehat\varphi_\eps(\lambda)= \int_{\R^d} \d x \, \e^{\mathbf i \, \langle\lambda, x\rangle} \, \varphi_\eps(x)$ denote the Fourier transform of the
mollifier $\varphi_\eps$ so that $|\widehat\varphi_\eps(\lambda) - 1| \to 0$ as $\eps\to 0$.
Then, by Parseval's identity, 
$$
\langle\ell_{\eps,\beta}, f \rangle= C \int_{\R^d} \d \lambda \,\,\widehat f(\lambda)\, \widehat\varphi_\eps(\lambda)\,\,\int_0^\beta\int_0^\beta \d\sigma\,\d s \,\, \e^{\mathbf i \langle \lambda, B^{\ssup 1}_\sigma- B^{\ssup 2}_s\rangle}
$$
for some positive constant $C$. Hence,
$$
\begin{aligned}
\E\Big[\big\langle f, \ell_\tau-\ell_{\eps,\tau}\big\rangle^k\Big] 
&= C^k\int_{(\R^d)^k} \bigg(\prod_{j=1}^k \d \lambda_j \, \widehat f(\lambda_j)\,\big[ 1- \widehat\varphi_\eps(\lambda_j)\big]\bigg)  \\
&\qquad\times\prod_{l=1}^2\bigg[\int_{[0,\tau]^k}\d s_1\dots \d s_k \,\,\E^{\ssup 1}\bigg\{ \e^{\mathbf i \sum_{j=1}^k \langle \lambda_j , B^{\ssup l}_{s_j}\rangle}\bigg\}\bigg] 
\end{aligned}
$$
Let 
$$
\int_{[0,\tau]^k_\leq} \d s = \int_{0\leq s_1\leq\dots\leq s_k\leq \tau} \d s_1\dots\d s_k
$$ 
and let $\mathfrak S_k$ denote the symmetric group of permutations of $\{1,\dots,k\}$. 
Then for any Brownian path $B$, by time-ordering and the Markov property, we get
$$
\begin{aligned}
&\int_{[0,\tau]^k}\d s_1\dots \d s_k \,\, \E\bigg\{ \e^{\mathbf i \sum_{j=1}^k \langle \lambda_j , B_{s_j}\rangle}\bigg\} \\
&= \sum_{\sigma \in \mathfrak S_k} \int_{[0,\tau]^k_{\leq}} \d s \,\,\E\bigg\{ \e^{\mathbf i \sum_{j=1}^k \langle \lambda_{\sigma(j)} , B_{s_j}\rangle}\bigg\} \\
&= \sum_{\sigma \in \mathfrak S_k} \int_{[0,\tau]^k_{\leq}} \d s \,\,\E\bigg\{ \exp\bigg\{\mathbf i \sum_{j=1}^k \bigg\langle \sum_{l=j}^k\lambda_{\sigma(l)} , B_{s_j}-B_{s_{j-1}}\bigg\rangle\bigg\}\bigg\} \\
&= \sum_{\sigma \in \mathfrak S_k} \int_{[0,\tau]^k_{\leq}} \d s \,\,\prod_{j=1}^k\exp\bigg\{-(s_j-s_{j-1})\bigg| \sum_{l=j}^k\lambda_{\sigma(l)}\bigg|^2 \bigg\}.
\end{aligned}
$$
Then
$$
\begin{aligned}
&\big[\E\otimes E^{\ssup\tau}]\Big[\big\langle f, \ell_\tau-\ell_{\eps,\tau}\big\rangle^k\Big]\\
&=C^k \int_{(\R^d)^k} \bigg(\prod_{j=1}^k \d \lambda_j \, \widehat f(\lambda_j)\,\big[ 1- \widehat\varphi_\eps(\lambda_j)\big]\bigg) \\
&\qquad\qquad\times\bigg[\sum_{\sigma \in \mathfrak S_k} \int_0^\infty \d r \,\,\e^{-r} \,\,\int_{[0,r]^k_{\leq}} \d s 
\,\,\prod_{j=1}^k\exp\bigg\{-(s_j-s_{j-1})\bigg| \sum_{l=j}^k\lambda_{\sigma(l)}\bigg|^2 \bigg\}\bigg]^2 \\
&\qquad\qquad =C^k \int_{(\R^d)^k} \bigg(\prod_{j=1}^k \d \lambda_j \, \widehat f(\lambda_j)\,\big[ 1- \widehat\varphi_\eps(\lambda_j)\big]\bigg)  
\\&\qquad\qquad\qquad\qquad\times\bigg[\sum_{\sigma \in \mathfrak S_k} \prod_{j=1}^k \int_0^\infty \exp\bigg\{-r\bigg(1+\bigg| \sum_{l=j}^k\lambda_{\sigma(l)}\bigg|^2 \bigg)\bigg\}\bigg]^2 \\
&= C^k \int_{(\R^3)^k} \bigg(\prod_{j=1}^k \d \lambda_j \, \widehat f(\lambda_j)\,\big[ 1- \widehat\varphi_\eps(\lambda_j)\big]\bigg)  
\times\bigg[\sum_{\sigma \in \mathfrak S_k} \prod_{j=1}^k \frac 1{1+\big| \sum_{l=j}^k\lambda_{\sigma(l)}\big|^2}\bigg\}\bigg]^2 .
\end{aligned}
$$
Jensen's inequality for $\frac 1 {k!}\sum_{\sigma\in \mathfrak S_k}$ implies 
$$
\begin{aligned}
&\overline\E\Big[\big\langle f, \ell_\tau-\ell_{\eps,\tau}\big\rangle^k\Big] \\
&\leq  C^k k!^2\,\, \int_{(\R^d)^k} \prod_{j=1}^k \bigg[\d \lambda_j \, \bigg(\widehat f(\lambda_j)\,\big[ 1- \widehat\varphi_\eps(\lambda_j)\big]\bigg)  
 \,\,\bigg(\frac 1{1+\big| \sum_{l=j}^k\lambda_{l}\big|^2}\bigg)^2\bigg] \\
  &\leq   \widetilde C^k\,\, k!^2\,\, \int_{(\R^3)^k} \prod_{j=1}^k \bigg[\d \lambda_j \, \big[ 1- \widehat\varphi_\eps(\lambda_j- \lambda_{j-1})\big]
 \,\,\bigg(\frac 1{1+\big| \lambda_{j}\big|^2}\bigg)^2\bigg]  \end{aligned}
$$
Recall that $|1-\widehat\varphi_\eps(\lambda)|\to 0$ as $\eps\to 0$ and note that in $d\in \{2,3\}$, 
\begin{equation}\label{integrability} 
\int_{\R^d} \frac{\d\lambda}{(1+|\lambda|^2)^{2}}<\infty.
\end{equation} 
These two facts then imply that 
\begin{equation}\label{eq4-superexp}
\begin{aligned}
&\lim_{\eps\to 0}\limsup_{k\to \infty}\frac 1k \log \int_{(\R^d)^k} \prod_{j=1}^k \bigg[\d \lambda_j \, \,\big[ 1- \widehat\varphi_\eps(\lambda_j- \lambda_{j-1})\big].
 \,\,\bigg(\frac 1{1+\big| \lambda_{j}\big|^2}\bigg)^2\bigg]\\
& \qquad=-\infty.
\end{aligned}
 \end{equation}
 Now let us split the integral $\int_{(\R^d)^k}$ in two parts by writing $(\R^d)^k ={(I)_k} \cup {(II)_k}$ where
 $$
 (I)_k= \bigg\{(\lambda_1,\dots,\lambda_k)\in (\R^d)^k\colon \#\{1\leq j\leq k\colon |\lambda_j|\geq R\} \geq \eta k\bigg\},
 $$
 for some $\eta\in(0,1)$ and $R>1$. Let $\varphi_\eps(x)=\frac 1 {Z_\eps} \exp\{-\frac{|x|^2}{2\eps}\} \1_{B_\eps(0)}(x)$  where
 $Z_\eps=\int_{B_\eps(0)} \exp\{-\frac{|x|^2}{2\eps}\}$. Then $\widehat\varphi_\eps(\lambda)=\frac 1 {Z_\eps}\exp\{-\eps^2|\lambda|^2/2\}$, and thus
 for any given $\delta>0$, we can choose $\lambda_0=\lambda_0(\eps)$ small enough so that $1-\widehat\varphi_\eps(\lambda)<\delta$ for $|\lambda|<\lambda_0$, while
 $\int_{B_R(0)^c} {\d\lambda}\, (1+|\lambda|^2)^{-2}<\delta$ for $R$ large enough, thanks to \eqref{integrability}. Then on the set $(I)_k$, we can ignore the terms $1-\widehat\varphi_\eps(\cdot)\leq 1$ and 
 take advantage of the fact that at least $\eta k$ of the $k$ integrals are taken outside the ball of radius $R$ around the origin and these integrals are therefore small, while
the other $(1-\eta)k$ integrals yield only some bounded exponential rate, i.e., 
$$
 \begin{aligned}
 \int_{A_k} \prod_{j=1}^k \,\, \frac{\d \lambda_j} {(1+| \lambda_{j}^2|)^{2}} \,\,\,
 & \leq \binom{k}{\eta k} \,\,\,\bigg(\int_{\R^d} \frac{\d \lambda} {(1+| \lambda|^2)^{2}}\bigg)^{(1-\eta)k}\,\,\, \bigg(\int_{B_R(0)^c} \frac{\d \lambda}{ (1+| \lambda|^2)^{2}}\bigg)^{\eta k}  \\
 &\leq C(\delta)^{\eta k}
 \end{aligned}
 $$
with $C(\delta)\to 0$ as $\delta\to 0$. On the complement $(II)_k$, we can also use that, for suitably chosen $\eta$, there are at least $\eta k$ indices $j\in\{1,\dots,k\}$ such that,
$1-\widehat\varphi_\eps(\lambda_j-\lambda_{j-1}) \leq \delta$ (for $\eps$ small enough) and deduce a similar estimate for the integral $\int_{(II)_k}$ as above. If we combine these two estimates, and send $\delta\to 0$, we end up with \eqref{eq4-superexp}, which in turn provides the desired estimate \eqref{eq:superexp}. 
 \end{proof}

 \medskip
 
 \begin{proof}[{\bf Proof of Theorem \ref{thm2}.}] For the proof of Theorem \ref{thm2} we will need two additional ingredients. First, we need that for any $\eps>0$, 
 the distribution of the tuples $\big(\beta^{-2}\ell_{\eps,\beta}, \mu^{\ssup 1}_{\eps,\beta},\dots, \mu^{\ssup N}_\beta)$ satisfies a {\it weak large deviation principle} 
 in the space $\Mcal(\R^d)\otimes \Mcal_1(\R^d)^N$ with rate function 
 
 \begin{equation}\label{I_eps}
\begin{aligned}
&\mathscr I_\eps\big(\mu;\mu_1,\dots,\mu_N\big)\\
&=
\inf\Big\{\frac{1}{2}\sum_{i=1}^{p}\|\nabla\psi_i\|_2^2\colon\psi_i\in H^1(\R^d), \|\psi_i\|_2=1, \psi_i^2\star\varphi_\eps=\frac{\d\mu_i}{\d x}\,\forall i=1,\dots,N,\\
&\qquad\qquad\mbox{ and } \prod_{i=1}^N \psi_i^2{\star}\varphi_\eps=\frac{\d\mu}{\d x}\Big\},
\end{aligned}
\end{equation}
if $\mu$ has a density, and $\mathscr I_\eps(\mu)=\infty$ otherwise. This statement can be found in \cite[Lemma 2.1]{KM11}. Moreover, it was also shown in \cite[Proposition 2.2]{KM11}
for every $\mu\in \mathcal M(\R^d)$
\begin{equation}\label{gamma}
\sup_{\delta>0}\liminf_{\eps\downarrow 0}\inf_{B_\delta(\mu;\mu_1,\dots,\mu_N)}\hspace{1mm}\mathscr I_\eps=\hspace{1mm}\mathscr I(\mu;\mu_1,\dots,\mu_N),
\end{equation}
where \begin{equation}\label{Itotal}
\mathscr I\big(\mu;\mu_1,\dots,\mu_N\big)=\frac{1}{2}\sum_{i=1}^{p}\|\nabla\psi_i\|_2^2,
\end{equation}
if $\mu,\mu_1,\dots,\mu_N$ each have densities $\psi^{2N}$ and $\psi_1^2,\dots,\psi_N^2$ with $\|\psi_i\|_2=1$ for $i=1,\dots,N$ such that $\psi,\psi_1,\dots,\psi_p\in H^1(\R^d)$ and $\psi^{2N}=\prod_{i=1}^N\psi_i^2$; otherwise the rate function is $\infty$.

 For the proof of Theorem \ref{thm2} we will now need to apply Varadhan's lemma. Given  Lemma \ref{lemma:superexp} and the above two assertions, the rest of the proof 
 now follows the same line of arguments as \cite[Theorem 1.12]{ABK06a}. The details are routine and are therefore omitted.
  \end{proof}

\noindent{\bf Acknowledgements: } The present research is funded by the Deutsche Forschungsgemeinschaft (DFG) under Germany's Excellence Strategy EXC 2044--390685587, Mathematics M\"unster: Dynamics--Geometry--Structure.

\end{document}